\newcommand{\widebar}{\overline}
\newcommand{\rrVert}{\Vert}
\newcommand{\rrvert}{\vert}
\newcommand{\llVert}{\Vert}
\newcommand{\llvert}{\vert}
\newcommand{\undersett}[2]{\mathop{#2}\limits_{#1}}
\newtheorem{theoreme}{Theorem}[section]
\newtheorem{prop}[theoreme]{Proposition}
\newtheorem{coro}[theoreme]{Corollary}
\newtheorem{lemme}[theoreme]{Lemma}
\newtheorem{conjecture}[theoreme]{Conjecture}
\newcommand{\R}{{\mathbb{R}}}
\newcommand{\N}{{\mathbb{ N}}}
\newcommand{\D}{{\mathbb{D}}}
\newcommand{\E}{{\mathbb{E}}}
\newcommand{\Lm}{{\mathcal{L}}}
\newcommand{\lb}{{\psi}}
\begin{document}
\begin{frontmatter}

\title{The component sizes of a critical random graph with given
degree sequence}
\runtitle{The component sizes of a critical random graph}

\begin{aug}
\author{\fnms{Adrien} \snm{Joseph}\corref{}\ead[label=e1]{adrien.joseph@normalesup.org}}
\runauthor{A. Joseph}
\affiliation{Universit\'{e} Pierre et Marie Curie}
\address{Laboratoire de Probabilit\'{e}s et Mod\'{e}les Al\'
{e}atoires\\
Universit\'{e} Pierre et Marie Curie\\
4 place Jussieu Tour 16-26\\
75005 Paris\\
France\\
\printead{e1}} %adresu isvedimo komanda gale!
\end{aug}

% HISTORY:
\received{\smonth{12} \syear{2010}}
\revised{\smonth{8} \syear{2013}}

% ABSTRACT
%
\begin{abstract}
Consider a critical random multigraph $\mathcal{G}_n$ with $n$
vertices constructed by the configuration model such that its vertex
degrees are independent random variables with the same distribution
$\nu$ (criticality means that the second moment of $\nu$ is finite
and equals twice its first moment). We specify the scaling limits of
the ordered sequence of component sizes of $\mathcal{G}_n$ as $n$
tends to infinity in different cases. When $\nu$ has finite third
moment, the components sizes rescaled by $n^{-2/3}$ converge to the
excursion lengths of a Brownian motion with parabolic drift above past
minima, whereas when $\nu$ is a power law distribution with exponent
$\gamma\in(3,4)$, the components sizes rescaled by $n^{-(\gamma
-2)/(\gamma-1)}$ converge to the excursion lengths of a certain
nontrivial drifted process with independent increments above past
minima. We deduce the asymptotic behavior of the component sizes of a
critical random simple graph when $\nu$ has finite third moment.
\end{abstract}

% KEYWORDS
% Pirmas kwd is didziosios raides
%
\begin{keyword}[class=AMS]
\kwd{60C05}
\kwd{05C80}
\kwd{90B15}
\end{keyword}
\begin{keyword}
\kwd{Critical random graph}
\kwd{random multigraph with given vertex degrees}
\kwd{power law}
\kwd{scaling limits}
\kwd{size-biased sampling}
\kwd{excursion}
\end{keyword}

\end{frontmatter}

\setcounter{footnote}{0}

%s1 #&#
\section{Introduction}\label{sec1}

%s1.1 #&#
\subsection{Overview}

The classical random graph model $G(n,p)$ has received a lot of
attention since its introduction by Erd\H{o}s and R\'enyi \cite
{MR0125031}, especially because of the existence of a phase transition.
In this model, a graph on $n$ labeled vertices is constructed randomly
by joining any pair of vertices by an edge with probability $p$,
independently of the other pairs. For large $n$, the structure of this
random graph depends on the value of $n p$: for $p \sim c/n$ with $c <
1$, the largest connected component contains $O( \ln n)$ vertices,
whereas when $p \sim c/n$ with $c > 1$, the largest component has
$\Theta(n)$ vertices while the second largest component has $O( \ln
n)$ vertices. The cases $c < 1$ and $c > 1$ are called subcritical and
supercritical, respectively. Much attention has been devoted to the
critical case $p \sim1/n$. When\vspace*{1pt} $p$ is exactly equal to $1/n$, the
largest components of $G(n,p)$ have sizes of order $n^{2/3}$.

Molloy and Reed \cite{MR1370952} showed that a random graph with a
given degree sequence exhibits a similar phase transition. More
precisely, for each $n \geq1$, let $\mathbf{d}^{(n)} =
(d_i^{(n)})_{1 \leq i \leq n}$ be a nonincreasing sequence of positive
integers such that $\sum_{i=1}^n d_i^{(n)}$ is even. Let $G(n,
\mathbf{d}^{(n)})$ be a random simple graph on $n$ labeled
vertices with degree sequence $\mathbf{d}^{(n)}$, uniformly chosen
among all possibilities (tacitly assuming that there exists any such
graph). We suppose throughout the overview that there exists a
probability distribution $(\nu_k)_{k \geq1}$ such that for each $k$,
$\# \{ i\dvtx d_i^{(n)} = k \} / n \rightarrow\nu_k$ as $n \rightarrow
\infty$. Let $\omega(n) = d_1^{(n)}$ be the largest degree in the
graph. Under some further strong conditions on the sequences
$\mathbf{d}^{(n)}$, Molloy and Reed proved that if $Q= \sum
_{k=1}^{\infty} k (k-2) \nu_k < 0$ and $\omega(n) \leq n^{1/8 -
\varepsilon}$ for some $\varepsilon> 0$, then with probability
tending to 1, the size of the largest component of $G(n, \mathbf
{d}^{(n)})$ is $O(\omega^2(n) \ln n)$, whereas if $Q > 0$ and $\omega
(n) \leq n^{1/4 - \varepsilon}$ for some $\varepsilon> 0$, then with
probability tending to 1, the size of the largest component is $\Theta
(n)$, and if additionally $Q$ is finite, the size of the second largest
component is $O(\ln n)$.

More recently, the near-critical behavior of such graphs has been
studied. When $Q=0$, the structure of $G(n, \mathbf{d}^{(n)})$
depends on how fast the quantity
\[
\alpha_n = \sum_{k=1}^{\infty} k
(k-2) \frac{ \# \{ i\dvtx d_i^{(n)} =
k \} }{ n} = \sum_{i=1}^n
\frac{d_i^{(n)} (d_i^{(n)}-2)}{n} %
\]
converges to 0; see Kang and Seierstad \cite{kangsierst}. Requiring a
fourth moment condition, Janson and Luczak \cite{jansonlucanweapp}
proved that if $n^{1/3} \alpha_n \rightarrow\infty$, then the size
of the largest component of $G(n, \mathbf{d}^{(n)})$ divided by $
n \alpha_n$ converges in probability to $\frac{2 \mu}{\beta}$,
while the size of the second largest component of $G(n, \mathbf
{d}^{(n)})$ divided by $ n \alpha_n$ converges in probability to 0,
where $\mu= \sum_{k=1}^{\infty} k \nu_k$ and $\beta= \sum_{k=3}^{\infty
} k(k-1)(k-2) \nu_k \in(0, \infty)$. Furthermore,
they noticed that their results can also be applied to some other
random graph models by conditioning on the vertex degrees, provided
that the random graph conditioned on the degree sequence has a uniform
distribution over all possibilities. This is the case for $G(n, p)$
with $n p \rightarrow1$ and $n^{1/3} (n p - 1) \rightarrow\infty$.
Note that if $n^{1/3} (n p - 1) = O(1)$, it is well known that the
largest component and the second largest component both have sizes of
the same order $n^{2/3}$, so that their results do not hold.

A major difficulty when dealing with the natural random graph $G(n,
\mathbf{d}^{(n)})$ is that, despite its straightforward
definition, it cannot be constructed via an easy algorithm. To
circumvent that obstacle, it is convenient to work with \emph
{multigraphs}, in which multiple edges and loops are allowed, using the
explicit procedure provided by the \emph{configuration model}, which
was introduced by Bender and Canfield~\cite{MR0505796} and later
studied by Bollob{\'a}s \cite{bollobasssss} and Wormald \cite
{wormald}. See also Molloy and Reed \mbox{\cite{MR1370952,MR1664335}}, Kang
and Seierstad \cite{kangsierst}, Bertoin and Sidoravicius \cite
{bertoinbreeeeesil}, van der Hofstad \cite{remcococ} and Hatami and
Molloy \cite{hatamireeeed}. Specifically, take a set of $d_i^{(n)}$
half-edges for the vertex with label $i$, $i \in\{1,\ldots, n \}$,
and combine the half-edges into pairs by a uniformly random matching of
the set of all half-edges. Observing that every simple graph $G(n,
\mathbf{d}^{(n)})$ may be constructed through the same number,
$d_1^{(n)} ! \cdots d_n^{(n)} !$, of pairing of half-edges, we get that
conditional on being a (simple) graph, the multigraph obtained by the
configuration model has the same distribution as $G(n, \mathbf
{d}^{(n)})$. That is why we shall first deal with multigraphs. We shall
then see how to derive results for simple graphs.

%s1.2 #&#
\subsection{The present model}

The present work is devoted to studying $G(n,\break  \mathbf{d}^{(n)})$
for a family of degree sequences that are, in a certain sense, ``inside
the critical window.'' We suppose that we are given a probability
distribution $\nu= (\nu_k)_{k \geq1}$ with finite second moment such
that $\nu_2 < 1$ and
$
\sum_{k=1}^{\infty} k (k-2) \nu_k = 0$.
Let $D$ be a random variable with distribution $\nu$. The multigraph
$\mathcal{G}_n$ consisting of $n$ vertices is defined by the
configuration model as follows. Let $D_1, D_2,\ldots, D_n$ be $n$
independent copies of $D$. Condition on $\sum_{i=1}^n D_i$ being even.
Take a set of $D_i$ half-edges for each vertex, and combine the
half-edges into pairs by a uniformly random matching of the set of all
half-edges. We denote by $\mathcal{G}_n$ the random multigraph this
construction leads to.

Let $\bolds{\mathcal{C}}^{\nu}_n$ be the ordered sequence of
component sizes of $\mathcal{G}_n$. We aim at specifying the
asymptotics of $\bolds{\mathcal{C}}^{\nu}_n$ in two different
settings. First, we shall study the case when $\nu$ has finite third
moment. We shall prove that $n^{-2/3} \bolds{\mathcal{C}}^{\nu
}_n$ then converges in distribution (with respect to a certain topology
that will be detailed below) as $n \rightarrow\infty$ to the ordered
sequence of the excursion lengths of a Brownian motion with parabolic
drift; see Theorem~\ref{keytheorr} below for the precise statement.
This should be viewed as an extension of Aldous's well-known result for
the critical behavior of Erd\H{o}s--Renyi random graphs; see \cite
{MR1434128}. Next the case when $\nu$ is a power law distribution with
exponent $\gamma\in(3,4)$ will be studied. We shall show that
$n^{-(\gamma-2)/(\gamma-1)} \bolds{\mathcal{C}}^{\nu}_n$
converges in distribution as $n \rightarrow\infty$ to the ordered
sequence of the excursion lengths of a certain nontrivial drifted
process with independent increments; see Theorem~\ref{levraikeytheorr} below.

Similar results have already been obtained for different random graph
models. For example, Turova \cite{turovvaaa} and Bhamidi, van~der Hofstad and van Leeuwaarden
\mbox{\cite{MR2735378,remcoetcompagnii}} studied special cases of \mbox{rank-1}
inhomogeneous random graphs constructed as follows. Let $F$ be a
distribution function on $[0, \infty)$ and $w_1, w_2,\ldots, w_n$ be
defined by
$
w_i = [1-F]^{-1}(i/n)$.
Consider a simple graph on $n$ labeled vertices such that an edge joins
the vertices $i$ and $j$ ($i \neq j$) with probability $1-\exp(-w_i
w_j/l_n)$, where $l_n = \sum_{i=1}^n w_i$, different edges being
independent. Denoting by $W$ a r.v. with distribution function $F$,
suppose that $\mathbb{E}[W^2] < \infty$. The criticality of the model
occurs when $\mathbb{E}[W^2] = \mathbb{E}[W]$. As in the present
work, two different settings have been considered. In the case $\mathbb
{E}[W^3]<\infty$, Turova \cite{turovvaaa} and Bhamidi, van~der Hofstad and van Leeuwaarden \cite
{MR2735378} separately showed that the ordered sequence of component
sizes of the inhomogeneous random graph with $n$ vertices once rescaled
by $n^{-2/3}$ converges in distribution as $n \rightarrow\infty$ to
the ordered sequence of the excursion lengths of a Brownian motion with
parabolic drift, thus extending the results of Aldous \cite
{MR1434128}. As for the power law distribution case, Bhamidi, van~der Hofstad and van Leeuwaarden
\cite{remcoetcompagnii} proved that if there exist $\gamma\in(3,4)$
and $c >0$ such that $1-F(x) \sim_{x \rightarrow\infty} c
x^{1-\gamma}$, the ordered sequence of component sizes of the
inhomogeneous random graph with $n$ vertices once rescaled by
$n^{-(\gamma-2)/(\gamma-1)}$ then converges in distribution as $n
\rightarrow\infty$ to hitting times of a thinned L\'evy process. This
convergence is related to certain cases of the results obtained by
Aldous and Limic in \cite{MR1491528}.

We too shall be interested in random simple graphs. Specifically, let
$\mathcal{SG}_n$ be the random simple graph consisting of $n$ vertices
such that, conditionally on the degree sequence $(D_1,\ldots, D_n)$,
it is uniformly distributed over all simple graphs with this degree
sequence. Denoting by $\bolds{\mathcal{D}}^{(n)}$ the ordered
sequence of $(D_1,\ldots, D_n)$, $\mathcal{SG}_n$ has the same
distribution as $G(n, \bolds{\mathcal{D}}^{(n)})$. The random
simple graph $\mathcal{SG}_n$ may also be viewed as the multigraph
$\mathcal{G}_n$ conditioned to be simple. When $\nu$ has finite third
moment, we shall be able to prove that the ordered sequence
$\bolds{\mathcal{SC}}^{\nu}_n$ of component sizes of the graph
$\mathcal{SG}_n$ has the same asymptotic behavior as $\bolds{\mathcal
{C}}^{\nu}_n$; see Theorem~\ref{lasttheofmylifeee} below.
We refer to Britton, Deijfen and Martin-L{\"o}f \cite{brittonlof} for an understanding
of the link between inhomogeneous random graphs and $\mathcal{SG}_n$.

The paper is organized as follows. In Sections~\ref{sectionfrretwo},
\ref{deuxpart}, \ref{poivguyiaeuy}, \ref{sectkiuyy}, \ref
{ghyttteerfeiii} and \ref{christinagoldlousub}, we deal with the
finite third moment case. Apart from Section~\ref
{christinagoldlousub}, the main techniques developed there are used in
Section~\ref{powerlawsecr}, where the power law distribution case is
studied. Section~\ref{christinagoldlousub}, devoted to $\bolds
{\mathcal{SC}}^{\nu}_n$, is specific to the finite third moment case.
The main results will be stated in Section~\ref{sectionfrretwo}. In
Section~\ref{deuxpart}, following the ideas of Aldous \cite
{MR1434128}, we shall observe that the study may be reduced to the
understanding of a walk defined via an algorithmic procedure related to
the configuration model. Thanks to \cite{MR1434128}, convergence of
that walk turns out to be sufficient. Such convergence will be obtained
in Section~\ref{sectkiuyy} using standard methodology from stochastic
process theory; see, for example, the CLT for continuous-time
martingale. A key technique to obtain martingales is Poissonization.
Basically, instead of considering multigraphs with exactly $n$
vertices, we shall deal with multigraphs with $\operatorname{Poisson}(n)$ vertices.
This will be fully explained in Section~\ref{poivguyiaeuy}. Our
approach also relies on size-biased ordering. Finally, in Section~\ref
{ghyttteerfeiii}, we shall be interested in the number of cycles in the
multigraph $\mathcal{G}_n$. To conclude, in Section~\ref
{powerlawsecr}, we shall study $\bolds{\mathcal{C}}^{\nu}_n$
when $\nu$ is a power law distribution with exponent in $(3,4)$. We
shall follow the same strategy, except we shall apply results of Aldous
and Limic \cite{MR1491528}. The final \hyperref[app]{Appendix} puts together technical lemmas.

%s2 #&#
\section{Formulation of the main results in the finite third moment setting} \label{sectionfrretwo}

In the first sections of the paper, we suppose that $\nu$ satisfies
%
%
%e2.1 #&#
\begin{equation}
\label{revivalprepaa} \sum_{k=1}^{\infty} k (k-2)
\nu_k =0, \qquad\sum_{k=1}^{\infty}
k^3 \nu_k < \infty\mbox{ and }\nu_2 < 1.
\end{equation}
The more general power law distribution case will be studied in
Section~\ref{powerlawsecr}.

Let
\[
\mu= \sum_{k=1}^{\infty} k \nu_k
\quad\mbox{and}\quad\beta= \sum_{k=3}^{\infty}
k(k-1) (k-2) \nu_k. %
\]
Observe that $\beta> 0$. Define the Brownian motion with parabolic drift
\[
W^{\nu} (t) = \sqrt{\frac{\beta}{\mu}} W(t) - \frac{\beta}{2 \mu
^2}
t^2,\qquad t \geq0, %
\]
where $(W (t), t \geq0)$ is a standard Brownian motion. The reflected
process indexed by the nonnegative half-line is
\[
R^{\nu} (t) = W^{\nu}(t) - \min_{0 \leq s \leq t}
W^{\nu
}(s),\qquad t \geq0. %
\]
An interval $\gamma= [l(\gamma), r(\gamma)]$ is an \emph{excursion
interval} of $R^{\nu}$ if $R^{\nu}(l(\gamma)) = R^{\nu}(r(\gamma))
= 0$ and $R^{\nu}(t) > 0$ on $l(\gamma) < t < r(\gamma)$. The
excursion has length $|\gamma| = r(\gamma) - l(\gamma)$. Aldous
observed in \cite{MR1434128} that we can a.s. order excursions by
length, that is, the set of excursions of $R^\nu$ may be written $\{
\gamma_j, j \geq1 \}$ so that the lengths $|\gamma_j|$ are
decreasing. In the notation of \cite{MR1434128}, define $l_{\searrow
}^2$ as the set of infinite~sequences $x = (x_1, x_2,\ldots)$ with
$x_1 \geq x_2 \geq\cdots\geq0$ and $\sum_i x_i^2 < \infty$, endowed
with the Euclidean metric. Aldous showed in \cite{MR1434128}, Lemma~25,
that $\mathbb{E}[\sum_{j \geq1} |\gamma_j|^2] < \infty$. In
particular $(|\gamma_j|, j \geq1)$ a.s. belongs to $l_{\searrow}^2$.
On the other hand, we may regard the finite sequence $\bolds{\mathcal
{C}}^{\nu}_n$ as a random element of $l_{\searrow}^2$ by
appending zero entries.

Our main result describes the component sizes of $\mathcal{G}_n$ for
large $n$; it mirrors that of Aldous \cite{MR1434128} for the critical
random graph.
%
%
%th2.1 #&#
\begin{theoreme} \label{keytheorr}
Suppose $\nu$ satisfies (\ref{revivalprepaa}).
Let $\bolds{\mathcal{C}}^{\nu}_n$ be the ordered sequence of
component sizes of $\mathcal{G}_n$. Then
\begin{eqnarray*}
n^{-2/3} \bolds{\mathcal{C}}^{\nu}_n &
\mathop{\longrightarrow}\limits
_{n \rightarrow\infty}^{(d)} & \bigl( \llvert
\gamma_j \rrvert, j \geq1 \bigr)
\end{eqnarray*}
with respect to the $l_{\searrow}^2$ topology.
\end{theoreme}
We shall observe that Theorem~\ref{keytheorr} is a direct corollary of
a simpler result, namely Theorem~\ref{thetheoaldsboooo}; see the
remark after its statement.

%
%
%re2.1 #&#
\begin{remark}
Suppose $\nu_2 = 1$, that is, $D \equiv2$. Then the components of
$\mathcal{G}_n$ are cycles. It is well known that the distribution
of\vadjust{\goodbreak}
cycle lengths is given by the Ewens's sampling formula ESF$(1/2)$, and
thus the size of the largest component divided by $n$ converges in
distribution to a nondegenerate distribution on $[0,1]$; see \cite
{MR2032426}, Lemma~5.7. This is also the case for the $k$th largest
component, where $k$ is a fixed positive integer. That is why the
assumption $\nu_2 < 1$ made in (\ref{revivalprepaa}) is crucial.
\end{remark}

Note that in our setting,
%
%
%e2.2 #&#
\begin{equation}
\label{derniequhabyvu} \liminf_{n \rightarrow\infty} \mathbb{P} (\mathcal
{G}_n\mbox{ is a simple graph} ) > 0;
\end{equation}
see Bollob{\'a}s \cite{MR1864966}, Janson \cite{jansonnnnn}. Recall
that $\mathcal{SG}_n$ is the random simple graph such that,
conditioned on the degree sequence $(D_1,\ldots, D_n)$, it is
uniformly distributed over all graphs with this degree sequence. But it
is also the multigraph $\mathcal{G}_n$ conditioned on being simple.
That is why authors usually first focus on $\mathcal{G}_n$ to then
deduce results for $\mathcal{SG}_n$ using (\ref{derniequhabyvu});
see, for instance, Pittel \cite{MR2434184}, Janson \cite
{janspowelaw}, Janson and Luczak \cite{jansonlucanweapp}. In the
finite third moment setting, we shall be able to set up this strategy;
we shall prove an analogous result of Theorem~\ref{keytheorr}:
%
%
%th2.2 #&#
\begin{theoreme} \label{lasttheofmylifeee}
Suppose $\nu$ satisfies (\ref{revivalprepaa}).
Let $\bolds{\mathcal{SC}}^{\nu}_n$ be the ordered sequence of
component sizes of $\mathcal{SG}_n$. Then
\begin{eqnarray*}
n^{-2/3} \bolds{\mathcal{SC}}^{\nu}_n &
\mathop{\longrightarrow}\limits
_{n \rightarrow\infty}^{(d)} & \bigl( \llvert
\gamma_j \rrvert, j \geq1 \bigr)
\end{eqnarray*}
with respect to the $l_{\searrow}^2$ topology.
\end{theoreme}

As before, we shall derive Theorem~\ref{lasttheofmylifeee} from a
simpler result stated in Theorem~\ref{jesuisallevoirfghfytu}.

%
%
%re2.2 #&#
\begin{remark}
Consider the case when $\nu$ is the Poisson distribution with
parameter 1 [observe though that $\mathbb{P}(D=0) > 0$, so strictly
speaking, it is out of our setting, but our result still holds as
vertices with degree 0 play no role]. Then, for large integers $n$,
$\mathcal{SG}_n$ is an approximation of the Erd\H{o}s--Renyi random
graph $G(n,1/n)$. Now, in that case, $\mu=\beta=1$, so the process
$W^{\nu}$ is the Brownian motion with drift $-t$ at time $t$, which
also describes the asymptotic component sizes of $G(n,1/n)$; see \cite
{MR1434128}.
\end{remark}

%s3 #&#
\section{The depth-first search} \label{deuxpart}

%s3.1 #&#
\subsection{An algorithmic construction of $\mathcal{G}_n$} \label{boulettemsckk}

We start by describing a convenient algorithm to construct a multigraph
distributed as $\mathcal{G}_n$. Suppose that $\sum_{i=1}^n D_i$ is
even. We partition the set of half-edges into three subsets: the set
$\mathcal{S}$ of sleeping half-edges, the set $\mathcal{A}$ of active
half-edges and the set $\mathcal{D}$ of dead half-edges. $\mathcal{S}
\cup\mathcal{A}$ is the set of living half-edges. Initially, all the
half-edges are sleeping.\vadjust{\goodbreak}

Pick a sleeping half-edge uniformly at random, and let $v_1$ denote the
vertex it is attached to. Declare all the half-edges attached to $v_1$
active. While $\mathcal{A} \neq\varnothing$, proceed as follows:
\begin{itemize}
\item Let $i$ be the largest integer $k$ such that there exists an
active half-edge attached to $v_k$.
\item Consider an active half-edge $l$ attached to $v_i$.
\item Kill $l$, that is, remove it from $\mathcal{A}$, and place it
into $\mathcal{D}$.
\item Choose uniformly at random a living half-edge $r$ and pair $l$ to it.
\item If $r$ is sleeping, let $v_{j+1}$ denote the vertex it is
attached to, where $j$ is the number of vertices which were found
before the discovery of the vertex attached to $r$. Then declare all
the half-edges attached to $v_{j+1}$ except $r$ active.
\item Kill $r$.
\end{itemize}
Iterate until $\mathcal{A}=\varnothing$. At that step, the first
component has been totally explored. If $\mathcal{S} \neq\varnothing$,
proceed similarly with the remaining living vertices until all the
half-edges have been killed. Then consider the multigraph with vertex
set $\{ v_i, 1 \leq i \leq n \} $ such that for all $1 \leq i,j \leq
n$, the vertex $v_i$ is joined by $k$ edges to the vertex $v_j$ if and
only if $k$ half-edges of $v_i$ have been paired to $k$ other
half-edges of $v_j$ during the procedure. It is easily seen this
multigraph is distributed as $\mathcal{G}_n$ and its vertices have
been ordered via a depth-first search. See Figure~\ref{figureunn} above for a simple illustration.

%
%
%f1 #&#
\begin{figure}%[h!]

\includegraphics{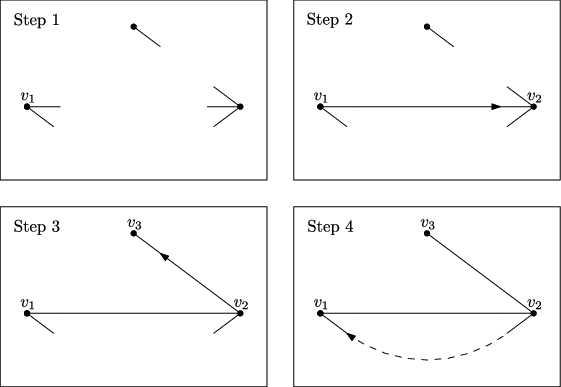}

\caption{A realization of the algorithm constructing $\mathcal{G}_3$.
The dashed oriented edge of the last picture contains a cycle half-edge
at its origin: $v_2$ has a cycle half-edge. By definition, $W_3(0)=0$,
$W_3(1)=0$, $W_3(2)=-1$ and $W_3(3)=-2$.}
\label{figureunn}
\end{figure}

Also note that, by construction, the order in which the components
appear in the depth-first search is size-biased order.

%s3.2 #&#
\subsection{The depth-first walk}

We now explain how the information on the component sizes may be
encoded in a walk constructed via the depth-first search which, as we
shall see, is related to the process $W^{\nu}$. We first need the
notion of \emph{cycle half-edge}.

%
%
%de3.1 #&#
\begin{defi}
A half-edge $l$ is called a \emph{cycle half-edge} if there exists a
half-edge $r$ such that:
\begin{itemize}
\item$l$ was killed before $r$;
\item$l$ was paired to $r$;
\item$r$ was active when $l$ was paired to it.
\end{itemize}
\end{defi}
%
%Observe that there exists a bijection between the set of cycle
%half-edges and the set of cycles, loops and multiple edges in $

Let us now define the walk associated to the depth-first search which
will encode all the information that we need to study the component
sizes. Write $(\widehat{D}_i, i \in\{1, 2,\ldots, n \})$ the sequence of
the degrees of the vertices of $\mathcal{G}_n$ ordered by their
appearances in the depth-first search:
for every $i \in\{1,\ldots, n \}$,
\[
\widehat{D}_i = \mbox{degree of }v_i. %
\]
Define the depth-first walk $(W_n(i), 0 \leq i \leq n)$ by letting for
all $i \in\{0,\ldots, n \}$,
%
%
%e3.1 #&#
\begin{eqnarray}
\label{defindelamarcle} W_n(i) & = & \sum_{j = 1}^i
\bigl( \widehat{D}_j -2 - 2 \# \{\mbox{cycle half-edges attached to
$v_j$} \} \bigr).
\end{eqnarray}
Note that since the cycle half-edges attached to $v_j$ always appear
after $v_j$ has been discovered, the number of them is not measurable
with respect to the first $j$ steps of the process.

Order the components $\mathcal{C}(n,1), \mathcal{C}(n,2),\ldots$
according to the depth-first search. Let
\begin{eqnarray*}
\zeta(n,k) & = & \sum_{j=1}^{k} \bigl|
\mathcal{C}(n,j)\bigr|,
\\
\zeta^{-1}(n,i) & = & \min\bigl\{ k\dvtx \zeta(n,k) \geq i \bigr\},
\end{eqnarray*}
so that $\zeta^{-1}(n,i)$ is the index of the component containing $v_i$.
It is easily seen that
%
%
%e3.2 #&#
%e3.3 #&#
\begin{eqnarray}\label{cmechantmsc}
W_n\bigl(\zeta(n,k)\bigr) = -2k\quad\mbox{and}\quad W_n(i) \geq-2k -1
\nonumber\\[-9pt]\\[-9pt]
\eqntext{\mbox{for all } \zeta(n,k) \leq i <\zeta(n,k+1).}
\end{eqnarray}
It follows that we can recover component sizes and indices from the
walk via
\begin{eqnarray*}
\zeta(n,k) & = & \min\bigl\{ i\dvtx W_n(i) = -2k \bigr\},
\\
\bigl|\mathcal{C}(n,j)\bigr| & = & \zeta(n,j)- \zeta(n,j-1),
\\
\zeta^{-1}(n,i) & = & 1 - \biggl\lceil\min_{j < i}
\frac{W_n(j)}{2} \biggr\rceil.
\end{eqnarray*}

%s3.3 #&#
\subsection{Weak convergence on every finite interval of the depth-first walk}

Let $X_n$, $n \geq1$, and $X$ be $\R$-valued C\`adl\`ag processes
defined on $[0, \infty)$. For every $t > 0$, denote by $\D([0,t])$
the space of all $\R$-valued c\`adl\`ag functions defined on $[0,t]$
endowed with the Skorokhod topology. Throughout this work, we say that
$X_n$ converges in distribution to $X$ with respect to the Skorokhod
topology on every finite interval as $n \rightarrow\infty$ if for
every $t >0$ and every bounded, continuous function $f$ defined on $(\D
([0,t]), \mathbb{R})$,
\[
\E\bigl[ f (X_n ) \bigr] \mathop{\longrightarrow}_{n \rightarrow\infty}  \E\bigl[f (X ) \bigr] %
\]
(here, we write $X_n$ and $X$ for their restrictions to the interval $[0,t]$).

Our main result relates the walk to the process $W^{\nu}$:
%
%
%th3.1 #&#
\begin{theoreme} \label{thetheoaldsboooo}
Suppose $\nu$ satisfies (\ref{revivalprepaa}). Rescale the
depth-first walk $W_n$ by defining for every $t \in[0, n^{1/3}]$
\begin{eqnarray*}
\widebar{W}_n(t) & = & n^{-1/3} W_n \bigl(
\bigl\lfloor t n^{2/3} \bigr\rfloor\bigr).
\end{eqnarray*}
Then
\begin{eqnarray*}
\widebar{W}_n & \mathop{\longrightarrow}\limits
_{n \rightarrow
\infty}^{(d)} &
W^{\nu}
\end{eqnarray*}
with respect to the Skorokhod topology on every finite interval.
\end{theoreme}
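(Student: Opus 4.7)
The plan is to follow the strategy sketched in the introduction: Poissonize the vertex count, extract a martingale from the depth-first walk, and apply a functional martingale CLT, in the spirit of Aldous' argument~\cite{MR1434128} for $G(n,p)$.

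First, I would replace the deterministic number of vertices $n$ by $N \sim \mathrm{Poisson}(n)$. This makes the counts $(N_k)_{k \geq 1}$ of vertices of each degree independent Poisson random variables, which is what permits the clean martingale structure used below. Since $|N - n| = O_{\mathbb{P}}(n^{1/2}) = o(n^{2/3})$, a standard de-Poissonization step will transfer the functional limit from the Poissonized walk back to $\overline{W}_n$.

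Next, I exploit the key fact (noted in Section~\ref{boulettemsckk}) that the depth-first search discovers vertices in \emph{size-biased order} by degree: conditionally on the degree sequence, $(\hat D_j)_{j \geq 1}$ has the law of a sampling without replacement from $\{D_1,\dots,D_N\}$ with weights proportional to the $D_i$. Decompose
\[
W_n(i) \;=\; \sum_{j=1}^i (\hat D_j - 2) \;-\; 2\sum_{j=1}^i c_j, \qquad \hat D_j - 2 \;=\; M_j + \Delta_j,
\]
where $\Delta_j := \mathbb{E}[\hat D_j \mid \mathcal{F}_{j-1}] - 2$ is the predictable drift and $M_j := \hat D_j - \mathbb{E}[\hat D_j \mid \mathcal{F}_{j-1}]$ is a martingale difference for the natural filtration of the DFS. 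The criticality condition $\sum k(k-2)\nu_k = 0$ (equivalently $\mathbb{E}[D^2] = 2\mu$) ensures that the zeroth-order drift vanishes. A convenient way to analyse the size-biased sample is to realize it via independent exponential clocks $\xi_i/D_i$ (vertex $i$ being sampled when its clock rings), which gives tractable asymptotics for $\sum_{i \text{ remaining}} D_i^p$, $p = 1, 2$. From those one gets, for $j = \lfloor t n^{2/3} \rfloor$,
\[
\Delta_j \;=\; \frac{\sum_{i \text{ rem.}} D_i(D_i - 2)}{\sum_{i \text{ rem.}} D_i} \;\sim\; -\,\frac{\beta}{\mu^2}\cdot\frac{j}{n},
\]
so that $n^{-1/3}\sum_{j \leq t n^{2/3}} \Delta_j \to -\beta t^2 / (2\mu^2)$, exactly the parabolic drift of $W^{\nu}$. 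Simultaneously, the per-step conditional variance of $M_j$ tends to $\mathbb{E}[D^3]/\mu - (\mathbb{E}[D^2]/\mu)^2 = \beta/\mu$, and the accumulated bracket, rescaled by $n^{-2/3}$, converges to $(\beta/\mu)\,t$, matching the Brownian part of $W^{\nu}$.

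It then remains to establish two points. (i) A Lindeberg condition for the martingale $(M_j)$, so that a functional martingale CLT yields the convergence of $n^{-1/3}\sum_{j \leq \cdot\, n^{2/3}} M_j$ to $\sqrt{\beta/\mu}\, W$; this follows from $\sum k^3 \nu_k < \infty$. (ii) The cycle term must be negligible: $n^{-1/3}\sum_{j \leq t n^{2/3}} c_j \to 0$. For (ii), the conditional probability that a given pairing produces a cycle half-edge is of order (active half-edges)/(living half-edges), which on the window $j \leq t n^{2/3}$ is $O(n^{1/3})/\Theta(n) = O(n^{-2/3})$; summing over the $O(n^{2/3})$ pairings in the window gives $O_{\mathbb{P}}(1)$ cycle half-edges in total, a bound to be made precise using the cycle-counting analysis of Section~\ref{ghyttteerfeiii}. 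I expect the main obstacle to be the uniform control in $j$ of the size-biased sum asymptotics (and the tightness required for the functional CLT), together with the Poissonization error; the parabolic drift emerges precisely from the interplay between criticality $\mathbb{E}[D(D-2)] = 0$ and the third-moment contribution $\mathbb{E}[D^2(D-2)] = \beta$.
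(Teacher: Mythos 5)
Your proposal is correct and follows essentially the same route as the paper: Poissonize the vertex count, realize the size-biased order via exponential clocks (Perman--Pitman--Yor), extract a martingale, identify the drift $-\beta t^2/(2\mu^2)$ and quadratic variation $(\beta/\mu)t$, apply a functional martingale CLT, then show that the cycle half-edge correction is $O_{\mathbb P}(1)$ and hence vanishes after the $n^{-1/3}$ rescaling.

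The one noteworthy implementation difference is where the martingale lives. You work with the discrete-time Doob decomposition in the DFS filtration, so the predictable drift $\Delta_j = \mathbb{E}[\hat D_j\mid\mathcal F_{j-1}]-2$ and conditional variance are \emph{random}, and the argument would still require a concentration step to replace $\sum_{\mathrm{rem}}D_i^p$ by its mean uniformly over $j\le tn^{2/3}$. The paper instead builds the Poisson point process $\Pi_n$ directly with the time-changed intensity $\pi_n(k,\mathrm{d}s)=k e^{-k\psi(s/n)}\psi'(s/n)\nu_k\,\mathrm{d}s$ (Lemma~\ref{chaosprope}: atoms arrive at rate $1$), so that the compensator $A_n$ and the predictable bracket $\langle M_n\rangle$ come out \emph{deterministic}, and Lemmas~\ref{babyonemofgt}--\ref{babyonemofgtjju} reduce to elementary estimates on the Laplace transform. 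Both routes lead to the same limit, but the PPP formulation makes the martingale-CLT hypotheses (and your worry about ``uniform control in $j$'') immediate; the concentration you anticipate needing is exactly what the Poissonization buys for free. Your treatment of the cycle term and the Lindeberg condition matches Section~\ref{ghyttteerfeiii} and the finite-third-moment argument in Lemma~\ref{babyonemofgtjju}, and your variance identity $\mathbb E[D^3]/\mu-(\mathbb E[D^2]/\mu)^2=\beta/\mu$ (using $\mathbb E[D^2]=2\mu$) is correct.
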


To see how Theorem~\ref{keytheorr} follows from Theorem~\ref
{thetheoaldsboooo}, we refer to Section~3.4 of the remarkable paper
\cite{MR1434128} of Aldous.\footnote{Recall that components appeared
in size-biased order in the depth-first walk.} Intuitively, the result
should be clear from property (\ref{cmechantmsc}) of depth-first walk.
Component sizes are indeed encoded as lengths of path segments above
past even minima; these converge to lengths of excursions of $W^{\nu}$
above past minima, which are just lengths of excursions of the
reflected process $(W^{\nu}(t) - \min_{0 \leq s \leq t} W^{\nu}(s),
t \geq0)$ above 0.
Similarly, Theorem~\ref{lasttheofmylifeee} is proven as soon as the
following result is shown:
%
%
%th3.2 #&#
\begin{theoreme} \label{jesuisallevoirfghfytu}
If $\nu$ satisfies (\ref{revivalprepaa}), then the rescaled walk
$\widebar{W}_n$ conditioned on the event $\{ \mathcal
{G}_n$ is simple$\}$ converges in distribution to $W^{\nu}$ with
respect to the Skorokhod topology on every finite interval as $n
\rightarrow\infty$.
\end{theoreme}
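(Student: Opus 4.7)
Denote by $L_n$ the number of loops of $\mathcal{G}_n$ and by $M_n$ the number of unordered pairs of vertices joined by at least two parallel edges, so that
$$
\{\mathcal{G}_n \text{ is simple}\} \;=\; \{L_n = 0,\ M_n = 0\}.
$$
The results that underlie (\ref{derniequhabyvu}) (Bollob\'as \cite{MR1864966}, Janson \cite{jansonnnnn}) actually give more than positivity of the limit: under (\ref{revivalprepaa}), the pair $(L_n, M_n)$ converges jointly in distribution to $(L, M)$, where $L$ and $M$ are independent Poisson random variables with parameters $\lambda_1, \lambda_2 > 0$ expressible in terms of the first two moments of $\nu$, and $\mathbb{P}(\mathcal{G}_n \text{ simple}) \to e^{-\lambda_1 - \lambda_2} > 0$.

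The core step would be to strengthen Theorem \ref{thetheoaldsboooo} into the joint weak convergence
$$
\left( \overline{W}_n,\ L_n,\ M_n \right) \;\underset{n \rightarrow \infty}{\overset{(d)}{\longrightarrow}}\; \left( W^{\nu},\ L,\ M \right),
$$
with $(L, M)$ \emph{independent} of $W^{\nu}$. Tightness of the triple follows from tightness of each marginal (Theorem \ref{thetheoaldsboooo} for the walk, Janson's Poisson limit for the second coordinates), so only the joint law of subsequential limits need be identified. To pin it down, I would revisit the Poissonization and martingale CLT arguments of Section \ref{sectkiuyy}: since $L_n + M_n$ is bounded in probability, the set of half-edges involved in loops or multiple edges has cardinality $O_P(1)$, so removing these pairings perturbs the walk $W_n$ by an $O_P(1)$ amount that is invisible after the $n^{-1/3}$ rescaling. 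One may therefore couple $\overline{W}_n$ to a modified rescaled walk $\overline{W}_n^\circ$ which (a) has the same limit $W^{\nu}$ as $\overline{W}_n$ and (b) is measurable with respect to a $\sigma$-field asymptotically orthogonal to $(L_n, M_n)$. This factorizes the joint characteristic function of the triple into the product of the marginal limits and yields the desired independence.

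Assuming the joint convergence, Theorem \ref{jesuisallevoirfghfytu} follows by a routine conditioning argument. For any bounded continuous functional $F$ on the Skorokhod space,
$$
\mathbb{E}\!\left[ F(\overline{W}_n) \,\big|\, \mathcal{G}_n \text{ simple} \right] \;=\; \frac{\mathbb{E}\!\left[ F(\overline{W}_n)\, \mathbf{1}_{L_n = 0,\, M_n = 0} \right]}{\mathbb{P}(L_n = 0,\, M_n = 0)}.
$$
The map $(x, \ell, m) \mapsto F(x)\, \mathbf{1}_{\ell = 0,\, m = 0}$ is bounded and continuous on the product of Skorokhod space with $\mathbb{N}^2$ in the discrete topology; hence by the continuous mapping theorem and the joint convergence, the numerator tends to $\mathbb{E}[F(W^{\nu})] \, \mathbb{P}(L = 0, M = 0)$, while the denominator tends to the positive limit $\mathbb{P}(L = 0, M = 0)$. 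The ratio equals $\mathbb{E}[F(W^{\nu})]$, which is precisely the content of the theorem.

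The main technical obstacle is the asymptotic independence in the joint limit. Morally, loops and multiple edges are determined by \emph{local} microscopic features of the half-edge pairing at individual vertices, whereas the Brownian-with-parabolic-drift fluctuations captured by $W^{\nu}$ come from the \emph{global} accumulation of the $\hat{D}_j - 2$ increments in (\ref{defindelamarcle}). Making this dichotomy rigorous requires estimating sharply the cycle half-edge contribution to the walk uniformly on $[0, n^{1/3}]$ and showing that only the cycle half-edges produced by genuine large cycles — not by loops or multi-edges — survive in the scaling limit. I expect the bulk of the work to lie in this quantitative decoupling.
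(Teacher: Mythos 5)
Your high-level plan — decompose $\{\mathcal{G}_n\text{ simple}\}=\{L_n=0,M_n=0\}$, prove joint convergence of $(\overline{W}_n,L_n,M_n)$ with asymptotic independence, then condition — is sound, and your closing conditioning step is correct as written. But the argument you sketch for the independence, which is the whole point, does not actually deliver it. You argue that because $L_n+M_n=O_P(1)$, re-pairing the $O_P(1)$ offending half-edges changes $W_n$ only by $O_P(1)$, hence produces a walk $\overline{W}_n^\circ$ that is both close to $\overline{W}_n$ and ``asymptotically orthogonal'' to $(L_n,M_n)$. The first assertion (small perturbation) is plausible, but it says nothing about the second: whether the modified walk is independent of $L_n,M_n$ depends on the joint construction, and resampling a handful of pairings does not, by itself, decouple the indicator $\mathbf{1}_{L_n=0,M_n=0}$ from the remaining half-edge matching. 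The claim that this ``factorizes the joint characteristic function'' is asserted rather than proved, and your closing remark that the bulk of the work is in separating loop/multi-edge cycle half-edges from genuine large-cycle ones points in the wrong direction — Lemma~\ref{formybertoi} already shows all cycle half-edges together contribute negligibly to the walk, so that distinction is not where the difficulty lies.

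The ingredient you are missing is a \emph{temporal} separation, which is exactly what the paper's Lemma~\ref{cestunlemmcleechrsit} supplies: with probability tending to one, the first loop or multiple edge is created only after the first $n^{3/4}$ vertices have been explored in the depth-first search, while $\overline{W}_n$ restricted to $[0,t]$ depends only on the first $\lfloor tn^{2/3}\rfloor$ steps, and $n^{2/3}\ll n^{3/4}$. Combined with the Markov structure of the Poissonized exploration from Section~\ref{sectkiuyy}, the event ``no loop or multi-edge arises after step $n^{3/4}$'' is (asymptotically) independent of $f(\overline{W}_n)$, and Lemma~\ref{cestunlemmcleechrsit} together with~(\ref{derniequhabyvu}) identifies that event with $\{\mathcal{G}_n\text{ simple}\}$ up to vanishing probability. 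In fact your $O_P(1)$ bound on $L_n+M_n$ is the right raw material — since those few half-edges sit at uniformly random places in the size-biased ordering, the chance any of them falls among the first $tn^{2/3}$ explored vertices is $O(n^{-1/3})$ — but you deploy it to bound a walk perturbation rather than to obtain the temporal separation that actually yields the independence. Without a lemma of that type, the proposed proof has a genuine gap.
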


The next three sections are devoted to the proof of Theorem~\ref
{thetheoaldsboooo}. Section~\ref{poivguyiaeuy} will introduce the
method. In Section~\ref{sectkiuyy}, we shall be interested in the
depth-first walk $(\sum_{j = 1}^i ( \widehat{D}_j -2), 0 \leq i \leq n)$.
It is easier to study the latter than the walk $W_n$ since\vspace*{1pt} it ignores
cycle half-edges, and its law only depends on the sequence $(\widehat
{D}_j, 1 \leq j \leq n)$, which has the law of the size-biased ordering
of $n$ independent copies of $D$. Let
\[
\bar{s}_n (t) = n^{-1/3} \sum
_{1 \leq j \leq tn^{2/3} } ( \widehat{D}_j -2 ),\qquad t \in\bigl[ 0,
n^{1/3} \bigr]. %
\]
We shall show that the walk $\bar{s}_n$ converges in distribution
to $W^{\nu}$ as $n \rightarrow\infty$. In Section~\ref
{ghyttteerfeiii}, we shall see that the difference between the two
rescaled depth-first walks $\widebar{W}_n$ and $\bar{s}_n$ is
so small that in the limit, these processes have the same behavior. The
combination of the two remarks yields Theorem~\ref{thetheoaldsboooo}.
As for Theorem~\ref{jesuisallevoirfghfytu}, it will be proved in
Section~\ref{christinagoldlousub}.

%s4 #&#
\section{Poissonization} \label{poivguyiaeuy}

As mentioned above, in this section, we forget the contribution of the
cycle half-edges to the depth-first walk $W_n$ (we shall see in
Section~\ref{ghyttteerfeiii} that there are indeed few cycle
half-edges up to time $t n^{2/3}$ for every fixed $t>0$), and we only
focus on the simpler walk $(\sum_{j = 1}^i ( \widehat{D}_j -2), 0 \leq i
\leq n)$.

It is easily seen that the configuration model defining $\mathcal
{G}_n$ induces a degree-biased ordering of its vertices: conditionally
on the degrees $D_1,\ldots, D_n$, the sequence $(\widehat{D}_1,\ldots,
\widehat{D}_n)$ has the law of a size-biased reordering of the real
numbers $D_1,\ldots, D_n$. Conditionally on $D_1 = d_1,\ldots, D_n=
d_n$, a convenient way to order the vertices of $\mathcal{G}_n$ in a
degree-biased fashion is to assign an exponential clock with parameter
$d_i$ to the vertex $i$, $i \in\{1, 2,\ldots, n\}$, and to order the
vertices according to the times the clocks they are attached to ring.

%s4.1 #&#
\subsection{Heuristics}

We are able to sample at the same time both the degrees of the vertices
of $\mathcal{G}_n$ and their reordering in a size-biased way via a
clever Point point process. The only drawback of this approach is that
the total number of vertices of the obtained multigraph is not exactly
$n$ but is a Poisson variable with parameter $n$ (so that to actually
obtain $\mathcal{G}_n$, one has to condition the total number of
vertices to be equal to $n$). Let us be more precise.

Consider a Poisson point process $\Pi_n^{(0)}$ on $\N^* = \{1, 2,\ldots
\}$ with parameter $n \nu$. The total number of its atoms is a
Poisson variable with parameter $n$, and conditionally on this number,
the atoms of $\Pi_n^{(0)}$ are i.i.d. with distribution $\nu$.
Assigning to each of them an exponential clock with appropriate
parameter would order them in a size-biased fashion.

We could have done those two operations directly by defining more
carefully the Poisson point process; indeed define $\Pi_n^{(1)}$ as a
Poisson point process on $(0,\infty) \times\N^*$ with intensity $\pi
_n^{(1)}$, where
\begin{eqnarray*}
\pi_n^{(1)} (\mathrm{d} t, k) & = & n \nu_k k
e^{-kt}\, \mathrm{d} t.
\end{eqnarray*}
Sort the atoms of $\Pi_n^{(1)}$ in increasing order of their $t$-components:
\begin{eqnarray*}
\Pi_n^{(1)} & = & \bigl\{ \bigl(t^{(1)}_1,
k^{(1)}_1 \bigr),\ldots, \bigl(t^{(1)}_{N^{(1)}},
k^{(1)}_{N^{(1)}} \bigr) \bigr\}
\end{eqnarray*}
(we drop the dependency on $n$ in the notations of $t^{(1)}_i$,
$k^{(1)}_i$ and $N^{(1)}$). Then
\begin{eqnarray*}
\bigl\{k^{(1)}_1,\ldots, k^{(1)}_{N^{(1)}}
\bigr\} & \stackrel{(d)} {=} & \Pi_n^{(0)}.
\end{eqnarray*}
Moreover, since $t^{(1)}_i$ corresponds to the exponential clock of
$k^{(1)}_i$, the sequence $(k^{(1)}_1,\ldots, k^{(1)}_{N^{(1)}})$ has\vspace*{1pt}
the law as the size-biased reordering of the real numbers
$k^{(1)}_1,\ldots, k^{(1)}_{N^{(1)}}$. Consequently, conditionally on
${N^{(1)}}=m$, $(k^{(1)}_1,\ldots, k^{(1)}_m)$ has the same
distribution as the random vector $(\widehat{D}_1,\ldots, \widehat{D}_m)$.

As mentioned in the introduction of this section, we are interested in
the walk $(\sum_{j = 1}^i ( \widehat{D}_j -2), 0 \leq i \leq n)$, which
may be viewed as a function having discontinuities at integer-valued
times. We see that we cannot reasonably approximate this function by
$(\sum_{(s,k) \in\Pi^{(1)}_n} (k-2) \mathbf{1}_{s \leq t}, t \geq
0)$, which has discontinuities at $t^{(1)}_1,\ldots,
t^{(1)}_{N^{(1)}}$; the sequence $(t^{(1)}_1, t^{(1)}_2-t^{(1)}_1,\ldots, t^{(1)}_{N^{(1)}}-t^{(1)}_{{N^{(1)}}-1})$ has no chance to look
like $(1,\ldots, 1)$, partly because for every $i \in\{1,\ldots,
{N^{(1)}}-1\}$, $t^{(1)}_{i}-t^{(1)}_{i-1}$ is stochastically dominated
by $t^{(1)}_{i+1} - t^{(1)}_i$ (with convention $t^{(1)}_0 = 0$). We
should thus transform the $t$-components of the atoms: an atom $(t,k)$
should be replaced by $(\phi_n(t),k)$, where $\phi_n$ is an concave
increasing function, so that conditionally on ${N^{(1)}}$, $\phi
_n(t^{(1)}_1), \phi_n(t^{(1)}_2)-\phi_n(t^{(1)}_1),\ldots, \phi
_n(t^{(1)}_{N^{(1)}})-\phi_n(t^{(1)}_{{N^{(1)}}-1})$ are i.i.d. and
close to 1. We shall show in the next section that there exists such a
function $\phi_n$ and that, conditionally on ${N^{(1)}} \geq i$, $\phi
_n(t^{(1)}_i)-\phi_n(t^{(1)}_{i-1})$ is an exponential variable with
parameter 1; see Lemma~\ref{chaosprope} below.

%s4.2 #&#
\subsection{Toward the definition of \texorpdfstring{$\Pi_n$}{Pi n}}

It turns out that the function $\phi_n$ is $n(1 - \Lm)$, where $\Lm$
is the Laplace transform of $\nu$,
\[
\Lm(t) = \sum_{k \in\N^{\ast}} e^{-kt}
\nu_k,\qquad t \geq0. %
\]
Indeed, write $\psi$ for the inverse of $1 - \Lm$ and consider a
Poisson point process $\Pi_n$ on $ (0, n) \times\mathbb{N}^{\ast}$
with intensity $\pi_n$, where
\begin{eqnarray*}
\pi_n(\mathrm{d} t, k) & = & \nu_k k e^{-k\psi(t/n)}
\psi'(t/n) \,\mathrm{d} t.
\end{eqnarray*}
Recall that the $k$-components of the atoms of $\Pi_n$ should be
viewed as degrees whereas the $t$-components should be seen as time.
[Note that $\Pi_n$ could have been defined as $\Pi_n = \{ (\tilde{t}_1,
\tilde{k}_1 ),\ldots, (\tilde{t}_N, \tilde{k}_N ) \}$, where\vspace*{1pt}
$N$ is a Poisson variable with parameter $n$ and $(\tilde{t}_i, \tilde
{k}_i)_{i \geq1}$ is a sequence of i.i.d. r.v. with distribution
$\frac{\pi_n}{n}$ \emph{independent} of $N$.] Sort the atoms of $\Pi
_n$ in increasing order of their $t$-components,
\begin{eqnarray*}
\Pi_n & = & \bigl\{ (t_1, k_1),\ldots,
(t_N, k_N) \bigr\}
\end{eqnarray*}
(here again, we drop the dependency on $n$ in the notations). Then, by
standard properties of Poisson point processes,
\begin{eqnarray*}
\bigl( (t_1, k_1),\ldots, (t_N,
k_N) \bigr) & \stackrel{(d)} {=} & \bigl( \bigl(\phi_n
\bigl(t^{(1)}_1 \bigr), k^{(1)}_1 \bigr),\ldots, \bigl(\phi_n \bigl
(t^{(1)}_{N^{(1)}} \bigr),
k^{(1)}_{N^{(1)}} \bigr) \bigr),
\end{eqnarray*}
where $\phi_n = n(1-\Lm) = n \psi^{-1}$. In particluar $\{k_1,\ldots,
k_N\}$ has the same distribution as $\Pi_n^{(0)}$.

As before, conditionally on $N=m$, $(k_1,\ldots, k_m)$ has the same
law as the random vector $(\widehat{D}_1,\ldots, \widehat{D}_m)$. This in
particular holds for $m=n$. Since $N$ is a Poisson variable with
parameter $n$ and, as we shall soon see, we are only interested in what
happens up to time $O(n^{2/3})$, we shall study the process $\Pi_n$
without the latter conditioning. We thus get a Markovian process. Let
us prove that conditionally on $N \geq i$, $t_i-t_{i-1}$ is an
exponential variable with parameter 1.

%
%
%le4.1 #&#
\begin{lemme} \label{chaosprope}
The point process $\{t_1,\ldots, t_N\}$ is Poisson point process on
$(0,n)$ with intensity $\mathrm{d} t$.
\end{lemme}

\begin{pf}
Define $p$ as the projection $p\dvtx (t,k) \mapsto t$. Then, by standard
properties of Poisson point processes, $\{t_1,\ldots, t_N\} = p(\Pi
_n)$ is a Poisson point process on $(0,n)$ with intensity $\pi
_n^{(p)}$ characterized by the following:
\[
\mbox{for every Borel subset $A$ of }(0,n),
\qquad\pi_n^{(p)}(A)= \pi_n \bigl(p^{-1} (A) \bigr). %
\]
Therefore, for every Borel subset $A$ of $(0,n)$,
\begin{eqnarray*}
\pi_n^{(p)}(A) &=& \int_A \sum
_{k \in\mathbb{N}^\ast} \nu_k k e^{-k\psi(t/n)}
\psi'(t/n) \,\mathrm{d} t
\\
&=& \int_A \bigl(
\psi^{-1}\bigr)'\bigl(\psi(t/n)\bigr)
\psi'(t/n) \,\mathrm{d} t
= \int_A \mathrm{d}t, %
\end{eqnarray*}
which proves the result.
\end{pf}

%s4.3 #&#
\subsection{Keys points of the section}

Let us sum up the points that will be used in the sequel.
%
%
%pr4.2 #&#
\begin{prop} \label{cbeaulaviiii}
Let $n$ be a positive integer, $(e_i)_{i \geq1}$ be a sequence of
independent exponential variables with parameter 1 and $(U_i)_{i \geq
1}$ be a sequence of \mbox{independent} random variables uniformly distributed
on $(0,1)$ \emph{independent} of $(e_i)_{i \geq1}$. Define
\begin{eqnarray*}
N &=& \max\Biggl\{ i \geq0\dvtx \sum_{j=1}^i
e_j < n \Biggr\},
\\
t_i & =& \sum_{j=1}^i
e_j, \qquad i \geq1,
\\
k_i &= &\sum_{j \in\N^*} j
\mathbf{1}_{d_{i,j-1}< U_i < d_{i,j}}, \qquad1 \leq i \leq N,
\\
\Pi_n & = & \bigl\{ (t_1, k_1),\ldots,
(t_N, k_N) \bigr\},
\end{eqnarray*}
where for every integer $i$ and $j$ such that $1 \leq i \leq N$ and $j
\geq1$,
$
d_{i,j} = \sum_{l=1}^j \nu_l l e^{-l\psi(t_i/n)} \psi'(t_i/n)
$.
Then:
\begin{itemize}
\item$\Pi_n$ is a Poisson point process on $(0,n) \times\N^*$ with
intensity $\pi_n$, where
\begin{eqnarray*}
\pi_n(\mathrm{d} t, k) & = & \nu_k k e^{-k\psi(t/n)}
\psi'(t/n) \,\mathrm{d} t.
\end{eqnarray*}
\item For\vspace*{1pt} every positive integer $m$, conditionally on $N=m$,
$(k_1,\ldots, k_m)$ has the same law as the random vector $(\widehat
{D}_1,\ldots, \widehat{D}_m)$.
\end{itemize}
\end{prop}

In the sequel, $N$, $(t_i)_{i \geq1}$, $(k_i)_{1 \leq i \leq N}$ and
$\Pi_n$ will always refer to those just-defined quantities.

%s5 #&#
\section{Convergence of the walk \texorpdfstring{$\bar{s}_n$}{s n}} \label{sectkiuyy}

It should now be natural to introduce the process $(S_n(t))_{t \geq0}$
defined as the sum of the $k$-components of the atoms of $\Pi_n$ minus
2 with $t$-components less than or equal to $t$:
\[
S_n(t)= \sum_{(s,k) \in\Pi_n} (k-2 )
\mathbf{1}_{s
\leq t} = \sum_{1 \leq j \leq N}
(k_j-2 ) \mathbf{1}_{t_j \leq t}. %
\]
We can now state the key result of the present work:
%
%
%pr5.1 #&#
\begin{prop} \label{gilfoallalllavernewww}
Rescale $S_n$ by defining
$
\widebar{S}_n(t) = n^{-1/3} S_n( t n^{2/3} ).
$
Then
\begin{eqnarray*}
\widebar{S}_n & \mathop{\longrightarrow}\limits
_{n \rightarrow
\infty}^{(d)} &
W^{\nu}
\end{eqnarray*}
with respect to the Skorokhod topology on every finite interval.
\end{prop}
\begin{pf}
We follow the ideas of Aldous \cite{MR1434128}. Let
\[
A_n(t) = \int_{(0, n) \times\mathbb{N}^{\ast} } \pi_n (\mathrm{d}s, k) (k-2) \mathbf{1}_{s \leq t},\qquad t \geq0, %
\]
be the continuous bounded variation process such that
\[
M_n (t) = S_n (t) - A_n (t),\qquad t \geq0,
\]
is a martingale. Observe that $A_n$ is deterministic. Just as we
rescaled $S_n$ to form~$\widebar{S}_n$, write $\widebar{A}_n$ and
$\widebar{M}_n$ for the correspondingly rescaled versions of $A_n$
and $M_n$. Proposition~\ref{gilfoallalllavernewww} is shown as soon
the following two results are established:
\[
\forall t_0 >0
\qquad\lim_{n \rightarrow\infty} \sup
_{t \leq t_0} \biggl\llvert{\widebar{A}}_n(t) +
\frac{\beta}{2 \mu^2} t^2 \biggr\rrvert= 0 %
\]
and
\[
\widebar{M}_n \mathop{\longrightarrow}\limits
_{n \rightarrow
\infty}^{(d)}
\sqrt{\frac{\beta}{\mu}} B %
\]
with respect to the Skorokhod topology on every finite interval,
where $B$ denotes a standard Brownian motion. We postpone\vadjust{\goodbreak} their proofs
to the \hyperref[app]{Appendix}; see Lemmas~\ref{babyonemofgt} and \ref{babyonemofgtjju}. The following estimate on $\Lm$:
%
%
%e5.1 #&#
\begin{equation}
\label{equdebertoiiii} \mathcal{L}''(t) = \E
\bigl[D^2 \bigr] - \E\bigl[D^3 \bigr] t + \mathop{o}_{t
\rightarrow0}(t)
\end{equation}
is a key ingredient in the proofs.
\end{pf}

We now give a key consequence of Proposition~\ref
{gilfoallalllavernewww} concerning the depth-first walk $\bar{s}_n$.

%
%
%co5.2 #&#
\begin{coro} \label{jeanferr}
The rescaled depth-first walk $\bar{s}_n$
converges in distribution to $W^{\nu}$ with respect to the Skorokhod
topology on every finite interval as $n \rightarrow\infty$.
\end{coro}
\begin{pf}
Denote by $\widetilde{S}_n$ the process
\[
\widebar{S}_n \bigl( n^{-2/3} t_{\lfloor u n^{2/3} \rfloor} \bigr),\qquad u \geq0. %
\]
Applying Propositions~\ref{cbeaulaviiii} and \ref{gilfoallalllavernewww}, $\widetilde{S}_n$ converges in distribution to
$W^{\nu}$ with respect to the Skorokhod topology on every finite
interval as $n \rightarrow\infty$. Now, for every $u \geq0$,
\begin{eqnarray*}
\widetilde{S}_n(u) & = & n^{-1/3} \sum
_{1 \leq j \leq u n^{2/3}} ( k_j -2 ).
\end{eqnarray*}
Since conditionally on $N=n$, $(k_1,\ldots, k_n)$ has the same law as
the random vector $(\widehat{D}_1,\ldots, \widehat{D}_n)$ (see
Proposition~\ref{cbeaulaviiii}), we get that for every $u>0$ and every
bounded, continuous function $f$ defined on $(\D([0,u]), \mathbb{R})$,
\begin{eqnarray*}
\E\bigl[ f (\widetilde{S}_n ) \rrvert N=n \bigr] &=& \E\bigl[f
(\bar{s}_n ) \bigr]
\end{eqnarray*}
(here, we write $\widetilde{S}_n$ and $\bar{s}_n$ for their
restrictions to the interval $[0,u]$). We thus just need to see why
\begin{eqnarray*}
\E\bigl[ f (\widetilde{S}_n ) \rrvert N=n \bigr] - \E\bigl[ f
(\widetilde{S}_n ) \bigr] & \undersett{n \rightarrow\infty} {
\longrightarrow} & 0.
\end{eqnarray*}
Now, conditionally on $N=n$, by Proposition~\ref{cbeaulaviiii}, the
sequence $(t_1,\ldots,\break  t_{\lfloor u n^{2/3}\rfloor})$ has the same
distribution as $(n V_1, n V_2,\ldots, n V_{\lfloor u n^{2/3}\rfloor
})$, where $0<V_1< \cdots< V_n<1$ is the ordered statistics of the
family of $n$ i.i.d. variables uniformly distributed on $(0,1)$. In
other words, the distribution of the random vector $(t_1,\ldots,
t_{\lfloor u n^{2/3}\rfloor})$ under\vspace*{1pt} the event $\{N=n\}$ is exactly
the distribution (without conditioning) of
\[
\frac{n}{t_{n+1}} (t_1, t_2,\ldots, t_{\lfloor u
n^{2/3}\rfloor} )
\]
(moreover, the latter random vector is independent of $t_{n+1}$).
Applying Proposition~\ref{cbeaulaviiii}, we thus deduce that
the conditional distribution of $\widetilde{S}_n$ under $\{N=n\}$ is
asymptotically close to the distribution of $\widetilde{S}_n$. We get the
result by applying the dominated convergence theorem (recall that $f$
is bounded and continuous).
\end{pf}

%s6 #&#
\section{Study of the cycle half-edges} \label{ghyttteerfeiii}

In this section, we turn our attention to the cycle half-edges. In
Section~\ref{hibergggju}, we shall prove that there are few cycle
half-edges in $\mathcal{G}_n$; see Lemma~\ref{formybertoi} below. We
shall then show in Section~\ref{ghyttteerfe} how to derive
Theorem~\ref{thetheoaldsboooo} from Corollary~\ref{jeanferr} and
Lemma~\ref{formybertoi}.

%s6.1 #&#
\subsection{Upper bound of the number of cycle half-edges} \label{hibergggju}

In this section, we prove the following result:
%
%
%le6.1 #&#
\begin{lemme} \label{formybertoi}
Let $t >0$ and $M > 0$. Introduce the event
\[
E_n(t, M) = \Bigl\{ \max_{i \leq t }  \Bigl\{ \bar{s}_n(i) -
\min_{k \leq i}  \bar{s}_n(k)
\Bigr\} \leq M \Bigr\}. %
\]
Then we have
\[
\limsup_{n \rightarrow\infty} \E\bigl[ \# \bigl\{ \mbox{cycle half-edges
attached to $v_i$, $i \leq t n^{2/3}$, in $
\mathcal{G}_n$} \bigr\} \mathbf{1}_{ E_n(t, M) } \bigr] < \infty.
\]
\end{lemme}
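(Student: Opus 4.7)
The key observation is that, by definition, a cycle half-edge attached to $v_j$ with $j\leq tn^{2/3}$ arises exactly at a pairing step where the depth-first-search-selected half-edge $l$ is attached to such a vertex \emph{and} the uniformly chosen partner $r$ turns out to be active. Writing $N$ for the quantity under expectation and $A(k)$, $L(k)$ for the numbers of active and living half-edges just before step $k$, we therefore have $N=\sum_k \mathbf{1}_{\{l\text{ at }v_j,\,j\leq tn^{2/3}\}}\cdot\mathbf{1}_{\{r\text{ active}\}}$, and conditionally on the history up to step $k$ the probability that $r$ is active equals $(A(k)-1)/(L(k)-1)$. I plan to bound $\E[N\mathbf{1}_{E_n(t,M)}]$ by this compensator, combined with uniform controls on $A$ when $l$ lies at one of our vertices, on the number of such $l$-selections, and on $L$.

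The crucial bound is that on $E_n(t,M)$, whenever $l$ lies at $v_j$ for some $j\leq tn^{2/3}$, one has $A(k)\leq Mn^{1/3}+O(n^{1/3})$, the lower-order term coming from the root degree of the current depth-first-search component, bounded by $\max_{j\leq tn^{2/3}}\hat{D}_j=O(n^{1/3})$ thanks to the finite second moment of $\hat{D}$ (a consequence of the finite third moment of $\nu$). For $k\leq T_1:=\tau_{tn^{2/3}+1}$ this follows by writing $A(k)\leq W_n(i(k))-\min_{j\leq i(k)}W_n(j)+O(n^{1/3})$; since $W_n(i)=s_n(i)-2\sum_{j\leq i}c_j$ with $c_j\geq 0$, the reflected process of $W_n$ is dominated pathwise by that of $s_n$, which is bounded by $Mn^{1/3}$ on $E_n$. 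For $k>T_1$, the very fact that $l$ lies at one of our vertices forces the depth-first search to have backtracked past all vertices discovered after $T_1$, whose half-edges are then all dead; hence $A(k)$ coincides with the active count restricted to $v_i$, $i\leq tn^{2/3}$, which is non-increasing after $T_1$ (no new activation there) and therefore still bounded by $A(T_1)\leq Mn^{1/3}+O(n^{1/3})$.

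Combining this with the elementary bound that the number of $l$-selections at our vertices does not exceed $|H_1|:=\sum_{j\leq tn^{2/3}}\hat{D}_j\leq Mn^{1/3}+2tn^{2/3}$ on $E_n$ (each half-edge being chosen as $l$ at most once), and using $L(k)\geq \mu n/2$ throughout the relevant range (enforced via an auxiliary stopping time at $Nn^{2/3}$ pairings together with a Chebyshev estimate for $\sum_i D_i$), optional stopping applied to $N$ and its compensator yields
$$
\E\!\left[N\,\mathbf{1}_{E_n(t,M)}\right]\leq\bigl(Mn^{1/3}+2tn^{2/3}\bigr)\cdot\frac{O(n^{1/3})}{\mu n/2}\cdot(1+o(1))=O(1).
$$
The main obstacle is precisely this control on $L$: the steps with $l$ at our vertices may in principle extend far past $T_1$, since the component of $v_{tn^{2/3}+1}$ is not constrained by $E_n$ alone. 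This will be addressed by truncating at the auxiliary stopping time and showing via a Markov-type argument that the tail contribution---the event that the total number of pairings in components containing one of the first $tn^{2/3}$ vertices exceeds $Nn^{2/3}$---is negligible, relying on the critical drift of $W_n$ to bound the relevant excursion lengths.
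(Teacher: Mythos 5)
Your approach has the same backbone as the paper's: write the expected count as $\E\bigl[\sum_k \mathbf{1}\{l\text{ at }v_j,\, j\le tn^{2/3}\}\,(A(k)-1)/(L(k)-1)\bigr]$, bound $A(k)$ by $Mn^{1/3}+O(1)$ on $E_n(t,M)$ via the excursion height of $\overline{s}_n$, and multiply by the number $\sum_{j\le tn^{2/3}}\hat D_j$ of relevant $l$-steps (the paper then bounds $\E[\sum_{j\le tn^{2/3}}\hat D_j]\le 2tn^{2/3}(1+o(1))$ using $\E[\hat D_1]=\E[D^2]/\mu=2$, rather than invoking $E_n$ a second time as you do). You correctly flag the delicate point: the paper's lower bound $L(k)\ge \sum_m\hat D_m-\sum_{m\le tn^{2/3}}\hat D_m\ge n-tn^{2/3}$ is only transparently valid for pairings occurring while at most $tn^{2/3}$ vertices have been discovered, whereas the later half-edges of $v_i$, $i\le tn^{2/3}$, may be selected as $l$ after the depth-first search has killed far more half-edges than that.

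The repair you sketch, however --- an auxiliary stopping time after $Nn^{2/3}$ pairings, a Chebyshev control on $\sum_i D_i$, and a ``Markov-type'' tail estimate invoking the critical drift --- is left as a plan, and that tail estimate is a genuine gap: $E_n(t,M)$ alone says nothing about how many pairings occur in the components meeting $\{v_1,\ldots,v_{\lfloor tn^{2/3}\rfloor}\}$, so you would need a separate tightness argument for excursion lengths of $\overline{s}_n$. There is a lighter route. Write $T_1$ for the step at which $v_{\lfloor tn^{2/3}\rfloor+1}$ is discovered and $A_1(T_1)$ for the set of half-edges attached to $v_1,\ldots,v_{\lfloor tn^{2/3}\rfloor}$ that are still active at $T_1$; on $E_n(t,M)$ one has $|A_1(T_1)|\le Mn^{1/3}+O(1)$. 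Any cycle half-edge attached to some $v_i$, $i\le tn^{2/3}$, and created after $T_1$ corresponds to a matched pair $(l,r)$ with both members in $A_1(T_1)$: at the pairing step $v_i$ is the current vertex, so every active half-edge sits on some $v_j$ with $j\le i\le tn^{2/3}$, and the active count restricted to $v_1,\ldots,v_{\lfloor tn^{2/3}\rfloor}$ is non-increasing after $T_1$. Given the history up to $T_1$ the unrevealed matching is uniform on $L(T_1)-1\ge n-tn^{2/3}-2$ candidates, so the conditional expected number of such matched pairs is at most $\binom{|A_1(T_1)|}{2}/(L(T_1)-1)=O(n^{-1/3})\to 0$. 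Thus the post-$T_1$ contribution vanishes, the paper's bound $L\ge n-tn^{2/3}$ is used only where it is valid (pre-$T_1$ steps), and one recovers the limit $2Mt$ with no stopping-time machinery at all.
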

\begin{pf}
We first study the number of active half-edges, given they contribute
to the appearance of cycle half-edges.

We claim that when a half-edge of $v_i$ is about to be paired (in the
algorithmic construction of $\mathcal{G}_n$ described in Section~\ref
{boulettemsckk}), the number $ \# \mathcal{A}$ of active half-edges is
less than or equal to $2+s_n(i)-\min_{j \leq i} s_n(j)$, where $s_n$
denotes the walk $(\sum_{j \leq i} (\widehat{D}_j - 2), 0 \leq i \leq
n)$. To see why this claim is true, first notice that it suffices to
prove it only for the first component. Then observe that the claim is
true if $v_i$ has just been discovered (this can be shown by induction:
this is true for the first vertex~$v_1$ and, when $i>1$, the number of
active half-edges that the discovery of $v_i$ creates is $-1+\mbox{degree of }v_i-1$, which is exactly the increment of $s_n$). On the
other hand, if $v_i$ had already been discovered before, the number of
active half-edges present when a new half-edge of $v_i$ is about to be
paired is less than the number of active half-edges present when the
first half-edge of $v_i$ was about to be paired (due to our choice of
the depth-first search; we go back to $v_i$ only when the vertices
$v_j$, $j>i$, have all been fully explored). As seen in the first
alternative, that last number is at most $2+s_n(i)-\min_{j \leq i}
s_n(j)$. This completes the proof of the claim.

Consequently, under the event $E_n(t, M)$, during the first $\lfloor t
n^{2/3} \rfloor$ steps, $ \# \mathcal{A}$ is always less than or
equal to $2+ M n^{1/3}$.

For every deterministic sequence $(x_1,\ldots, x_n)$ of positive
integers such that $\sum_{i=1}^n x_i$ is even, conditionally on the
event $(\widehat{D}_1,\ldots, \widehat{D}_n)=(x_1,\ldots, x_n)$, one has
\begin{eqnarray*}
&& \E\bigl[ \# \bigl\{\mbox{cycle half-edges attached to $v_i$,
$i \leq t n^{2/3}$} \bigr\} \mathbf{1}_{ E_n(t, M) } |
\widehat{D}_1 = x_1,\ldots, \widehat{D}_n =
x_n \bigr]
\\
&&\qquad = \E\Biggl[ \sum_{i=1}^{ t n^{2/3} }
\sum_{k=1}^{\widehat
{D}_i} \mathbf{1}_{\{ \mathrm{the}\ k\mathrm{th}\ \mathrm{half\mbox{-}edge\ of}\ v_i\
\mathrm{is\ a\ cycle\ half\mbox{-}edge} \}}
\mathbf{1}_{ E_n(t, M) } \biggr\rrvert
\\
&&\hspace*{184pt}\widehat{D}_1 = x_1,\ldots,\widehat{D}_n = x_n \Biggr]
\\
&&\qquad \leq \sum_{i=1}^{ t n^{2/3} } \sum
_{k=1}^{x_i} \mathbb{P} \bigl( \mbox{the $k$th
half-edge of $v_i$ is a cycle half-edge} \llvert
\\
&&\hspace*{107pt}
\widehat{D}_1 = x_1,\ldots, \widehat{D}_n =
x_n\mbox{ and } E_n(t, M) \bigr)
\\
&&\qquad \leq\sum_{i=1}^{ t n^{2/3} } x_i
\frac{ M n^{1/3} +2}{\sum_{m=1}^n x_m - \sum_{m=1}^{ t n^{2/3} } x_m}
\\
&&\qquad  \leq \sum_{i=1}^{ t n^{2/3} } x_i
\frac{ M n^{1/3} + 2}{n-t n^{2/3}}.
\end{eqnarray*}
Consequently,
\begin{eqnarray*}
&& \E\bigl[ \# \bigl\{ \mbox{cycle half-edges attached to $v_i$,
$i \leq t n^{2/3}$, in $\mathcal{G}_n$} \bigr\}
\mathbf{1}_{ E_n(t,
M) } \bigr]
\\
&&\qquad \leq \frac{ M n^{1/3} + 2}{n-t n^{2/3}} \E\Biggl[\sum_{i=1}^{ t
n^{2/3}}
\widehat{D}_i \Biggr]
\\
&&\qquad \leq \frac{ M n^{1/3} + 2}{n-t n^{2/3}} t n^{2/3} \E[\widehat{D}_1 ].
\end{eqnarray*}
Note that
$
\mathbb{E} [ \widehat{D}_1 ] \leq\sum_{k=1}^{\infty} k \frac{k \nu
_k}{\mu}$.
Hence
\[
\limsup_{n \rightarrow\infty} \E\bigl[ \# \bigl\{ \mbox{cycle half-edges
attached to $v_i$, $i \leq t n^{2/3}$} \bigr\} \mathbf
{1}_{ E_n(t, M) } \bigr] \leq2 M t, %
\]
which completes the proof of Lemma~\ref{formybertoi}.
\end{pf}

%
%
%re6.1 #&#
\begin{remark}
We can prove that in fact, for every $t > 0$,
\[
\limsup_{n \rightarrow\infty} \E\bigl[ \# \bigl\{ \mbox{cycle half-edges
attached to $v_i$, $i \leq t n^{2/3}$, in $
\mathcal{G}_n$} \bigr\} \bigr] < \infty. %
\]
\end{remark}

%
%
%re6.2 #&#
\begin{remark}
We stress that a consequence of \cite{bertoinbreeeeesil}, Theorem~1, is
that the expected total number of half-edges present in a component
containing a cycle half-edge is $o(n)$. This also holds for the
subcritical regime.
\end{remark}

%s6.2 #&#
\subsection{End of the proof of Theorem~\texorpdfstring{\protect\ref{thetheoaldsboooo}}{3.1}} \label{ghyttteerfe}

In this section, we prove Theorem~\ref{thetheoaldsboooo}. We keep the
notation of Section~\ref{hibergggju}. Let $t > 0$. Applying
Corollary~\ref{jeanferr} and the Portmanteau theorem, it suffices to
prove that for every bounded, Lipschitz function $f$ defined on $(\D
([0,t]), \mathbb{R})$, $\E[f(\widebar{W}_n)] - \E[f(\bar{s}_n)]$ tends
to 0 as $n \rightarrow\infty$. Let $f$ be such a
function. There exists $K>0$ such that for every $w, w' \in(\D
([0,t]), \mathbb{R})$, $|f(w)| \leq K$ and $|f(w) - f(w')| \leq K \| w
- w' \|$. Let $M>0$. One has
\begin{eqnarray*}
& & \bigl\llvert\E\bigl[f (\widebar{W}_n ) \bigr] - \E\bigl[f (
\bar{s}_n ) \bigr] \bigr\rrvert
\\
&&\qquad  =  \E\bigl[ \bigl\llvert f (\widebar{W}_n ) - f (
\bar{s}_n ) \bigr\rrvert\mathbf{1}_{E_n ( t, M
)} \bigr] + \E
\bigl[ \bigl\llvert f (\widebar{W}_n ) - f ( \bar{s}_n
) \bigr\rrvert( 1 - \mathbf{1}_{
E_n ( t, M )} ) \bigr]
\\
&&\qquad  \leq \E\bigl[ K \llVert\widebar{W}_n - \bar{s}_n
\rrVert\mathbf{1}_{E_n ( t, M )} \bigr] + \E\bigl[ 2 K ( 1 -
\mathbf{1}_{ E_n ( t, M )} ) \bigr]
\\
&&\qquad  \leq 2 K n^{-1/3} \E\bigl[ \# \bigl\{ \mbox{cycle half-edges
attached to $v_i$, $i \leq t n^{2/3}$} \bigr\}
\mathbf{1}_{E_n
( t, M )} \bigr]
\\
&&\quad\qquad{} + 2 K \mathbb{P} \Bigl( \max_{i \leq t} \Bigl\{
\bar{s}_n(i) - \min_{k \leq i}  \bar{s}_n(k) \Bigr\}
\geq M \Bigr).
\end{eqnarray*}
Lemma~\ref{formybertoi} ensures that
\[
\lim_{n \rightarrow\infty} n^{-1/3} \E\bigl[ \# \bigl\{ \mbox{cycle half-edges attached to $v_i$, $i \leq t n^{2/3}$}
\bigr\} \mathbf{1}_{E_n ( t, M )} \bigr] = 0. %
\]
Moreover, applying Corollary~\ref{jeanferr} and the Portmanteau theorem,
\[
\limsup_{n \rightarrow\infty} \mathbb{P} \Bigl( \max_{i \leq t }
\Bigl\{ \bar{s}_n(i) - \min_{k \leq i}
\bar{s}_n(k) \Bigr\} \geq M \Bigr) \leq\mathbb{P} \Bigl(
\max_{s \leq t }  \Bigl\{ W^{\nu}(s) - \min_{u \leq s} W^{\nu}(u) \Bigr\} \geq M \Bigr). %
\]
Therefore, for every $M > 0$,
\[
\limsup_{n \rightarrow\infty} \bigl\llvert\E\bigl[f (\widebar
{W}_n ) \bigr] - \E\bigl[f (\bar{s}_n ) \bigr]
\bigr\rrvert\leq2 K \mathbb{P} \Bigl( \max_{s \leq t }  \Bigl
\{
W^{\nu}(s) - \min_{u \leq s}    W^{\nu}(u)
\Bigr\} \geq M \Bigr). %
\]
Now, the continuity of $W^{\nu}$ implies that
\[
\lim_{M \rightarrow\infty} \mathbb{P} \Bigl( \max_{s \leq t }
\Bigl\{ W^{\nu}(s) - \min_{u \leq s}
W^{\nu}(u) \Bigr\} \geq M \Bigr) = 0. %
\]
Hence
\[
\lim_{n \rightarrow\infty} \E\bigl[f (\widebar{W}_n ) \bigr] -
\E\bigl[f (\bar{s}_n ) \bigr] =0. %
\]
Theorem~\ref{thetheoaldsboooo} is therefore proved.

%s7 #&#
\section{Study of the random simple graph $\mathcal{SG}_n$} \label{christinagoldlousub}

The setting of this section is the same as before: $\nu$ is supposed
to satisfy (\ref{revivalprepaa}). We intend to show Theorem~\ref
{jesuisallevoirfghfytu} (recall that, as before, Theorem~\ref
{lasttheofmylifeee} follows from Theorem~\ref{jesuisallevoirfghfytu}).
The proof is divided into two steps. First (see Lemma~\ref
{cestunlemmcleechrsit} below) we shall prove that, with probability
tending to 1, the possible loops and multiple edges in $\mathcal{G}_n$
arrive only after the first $\lfloor n^{3/4} \rfloor$ vertices have
been explored during the depth-first search. We shall then deduce that
the walk $\widebar{W}_n$ conditioned on the event $\{ \mathcal{G}_n$ is simple$\}$ has the same asymptotic behavior as
the walk $\widebar{W}_n$; see Section~\ref{cestladernisectdupap}.

%s7.1 #&#
\subsection{Time arrival of loops and multiple edges}

In this section, we prove the following result:
%
%
%le7.1 #&#
\begin{lemme} \label{cestunlemmcleechrsit}
Let $T(n)$ be the minimal index of a vertex of $\mathcal{G}_n$ having
a loop or a multiple edge:
\[
T(n) = \inf\bigl\{i \in\{1,\ldots, n\}\dvtx \mbox{$v_i$ has a loop or
a multiple edge} \bigr\}. %
\]
Then
\[
\lim_{t \rightarrow\infty} \mathbb{P}\bigl(T(n) > n^{3/4}\bigr) = 1.
\]
\end{lemme}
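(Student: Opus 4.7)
The plan is to apply Markov's inequality to the random variable
$$X_n := \# \left\{ i \leq \lfloor n^{3/4} \rfloor : v_i \text{ has a loop or a multiple edge in } \mathcal{G}_n \right\},$$
and show that $\mathbb{E}[X_n] \to 0$ as $n \to \infty$, which will imply $\mathbb{P}(T(n) \leq n^{3/4}) \to 0$. Set $\mathcal{V}_n := \{v_1, \ldots, v_{\lfloor n^{3/4} \rfloor}\}$ and $L := \sum_{j=1}^n D_j$. I bound $\mathbb{E}[X_n]$ from above by the expected number of loops at vertices of $\mathcal{V}_n$ plus the expected number of pairs of parallel edges with at least one endpoint in $\mathcal{V}_n$, and compute both by conditioning on the degree sequence $(D_1, \ldots, D_n)$ so that the matching of the $L$ half-edges is uniform.

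For each vertex $v$, every loop configuration at $v$ (a pair of half-edges of $v$ matched together) occurs with probability $1/(L-1)$, and conditional on such a configuration the remaining $L-2$ half-edges are paired uniformly. Each multi-edge configuration between $v$ and some other vertex $u$ (a specific matching of two half-edges of $v$ to two half-edges of $u$) has probability of order $1/L^2$. The crucial estimate is
$$\mathbb{P} \left( v \in \mathcal{V}_n \, \middle| \, \text{local bad configuration at } v \right) \leq C \frac{D_v \, n^{3/4}}{L}$$
for a constant $C$ depending only on $\nu$; this follows by running the algorithmic construction of Section~\ref{boulettemsckk} on the conditioned uniform pairing, since $v$ is discovered only if one of its (at most $D_v$) half-edges is sampled as the partner $r$ of some active $l$, and a union bound over the first $\lfloor n^{3/4} \rfloor$ discovery steps (using that the number of living half-edges stays of order $L$ throughout this window) yields the claim.

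Combining this per-configuration probability with the combinatorial counts ($\binom{D_v}{2}$ possible loops at $v$ and $\binom{D_v}{2} \sum_u \binom{D_u}{2}$ possible multi-edges involving $v$), using $\sum_u D_u^2 = O(L)$ (a consequence of $\mathbb{E}[D^2] < \infty$), and summing over $v$ gives
$$\mathbb{E}[X_n \mid D_1, \ldots, D_n] \leq C' \, \frac{n^{3/4}}{L^2} \sum_{v} D_v^3.$$
Taking expectation with respect to the degrees and invoking the law of large numbers (so that $L \sim n\mu$ and $\sum_v D_v^3 \sim n \, \mathbb{E}[D^3]$, where the latter uses the finite third moment assumption in (\ref{revivalprepaa})) leads to $\mathbb{E}[X_n] = O(n^{-1/4}) \to 0$, which completes the argument.

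The main obstacle is establishing the factorised upper bound on $\mathbb{P}(v \in \mathcal{V}_n \cap \{\text{local bad configuration at } v\})$. These two events are both measurable with respect to the random pairing and are positively correlated, since high-degree vertices are more likely both to appear early in size-biased DFS order and to host a loop or multi-edge. Conditioning on a local bad configuration nevertheless removes only $O(1)$ half-edges of $v$, so the effective number of half-edges of $v$ still available for its discovery from the outside remains of order $D_v$, preserving the size-biased discovery estimate up to a multiplicative constant. Making this precise is the one delicate step, and is best handled via a step-by-step algorithmic analysis on the conditioned uniform pairing, in the spirit of Section~\ref{hibergggju}.
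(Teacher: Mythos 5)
Your overall strategy---bound $\mathbb{P}(T(n)\le n^{3/4})$ by Markov's inequality applied to the expected number of bad configurations among the first $\lfloor n^{3/4}\rfloor$ discovered vertices---is exactly the paper's, and the final $O(n^{-1/4})$ rate is the same. But the paper's execution is considerably leaner because it conditions on the \emph{discovery-ordered} degrees $(\hat D_1,\dots,\hat D_n)$ rather than on the labelled degrees $(D_1,\dots,D_n)$. Under that conditioning, ``which vertices land in $\mathcal{V}_n$'' is no longer a random event to be estimated: the index $i\le n^{3/4}$ already identifies the degree $x_i=\hat D_i$ in play, and for the $k$-th half-edge of $v_i$ the probability of closing a loop is $\le (x_i-k)/(n-n^{3/4})$ and of creating a multi-edge is $\le (k-1)/(n-n^{3/4})$, where $n-n^{3/4}$ is a \emph{deterministic} lower bound on the number of living half-edges (at least $n-i$ vertices are still sleeping and each has degree $\ge 1$). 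Summing over $k$ and $i$ gives $\frac{n^{3/4}}{n-n^{3/4}}\,\mathbb{E}[\hat D_1^2]$, and $\mathbb{E}[\hat D_1^2]=\mathbb{E}[D^3]/\mu<\infty$ finishes the proof in essentially one line.

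The gap in your version is precisely the factorised bound $\mathbb{P}(v\in\mathcal{V}_n\mid\text{bad config at }v)\le C D_v n^{3/4}/L$, which you yourself flag as ``the one delicate step.'' The union-bound sketch points in the right direction, but it leaves several things to be supplied: a quantitative lower bound on the pool of half-edges available for sampling at each of the first $n^{3/4}$ discovery steps (you write ``stays of order $L$ throughout this window,'' but that is exactly what needs proving, and a law of large numbers will not give the uniform, deterministic control you implicitly use---the clean way is the $\ge n-n^{3/4}$ count above); the root case, where $v$ enters $\mathcal{V}_n$ by being sampled as the start of a new component rather than as a partner $r$; and the boundary case where the bad configuration exhausts all but at most one half-edge of $v$ (e.g.\ a loop at a degree-$2$ vertex), so that discovery-from-outside is impossible and only the root mechanism remains. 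None of these is fatal and your route could be carried through, but the paper's conditioning on $(\hat D_i)$ folds the early-discovery question into the indexing and removes all of these complications at once---that is the idea worth absorbing.
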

Observe that $T(n) = \infty$ if and only if $\mathcal{G}_n$ is simple.

\begin{pf*}{Proof of Lemma \ref{cestunlemmcleechrsit}}
Obviously, it suffices to show that
\[
\lim_{n \rightarrow\infty} \E\bigl[ \# \bigl\{ \mbox{loops or multiple
edges attached to $v_i$, $i \leq n^{3/4}$, in $\mathcal
{G}_n$} \bigr\} \bigr] = 0. %
\]
Let us establish this assertion. We proceed the same way as in the
proof of Lemma~\ref{formybertoi}. For every deterministic sequence
$(x_1,\ldots, x_n)$ of positive integers such that $\sum_{i=1}^n x_i$
is even, conditionally on the event $(\widehat{D}_1,\ldots, \widehat
{D}_n)=(x_1,\ldots, x_n)$, one has
\begin{eqnarray*}
&& \E\bigl[ \# \bigl\{\mbox{loops or multiple edges attached to
$v_i$, $i \leq n^{3/4} $} \bigr\} \llvert
\widehat{D}_1 = x_1,\ldots, \widehat{D}_n =
x_n \bigr]
\\
&&\qquad \leq \sum_{i=1}^{ n^{3/4}} \sum
_{k=1}^{x_i} \mathbb{P} \bigl( \mbox{the $k$th
half-edge of $v_i$ creates a loop or a multiple edge} \llvert
\\
&&\hspace*{225pt} \widehat{D}_1 = x_1,\ldots, \widehat{D}_n =
x_n \bigr).
\end{eqnarray*}
Now, the $k$th half-edge of a vertex with degree $x_i$ ($i \leq
n^{3/4}$) creates a loop with probability at most $\frac
{x_i-k}{n-n^{3/4}}$. It creates a multiple edge with probability at
most $\frac{k-1}{n-n^{3/4}}$. Consequently
\begin{eqnarray*}
&& \E\bigl[ \# \bigl\{\mbox{loops or multiple edges attached to
$v_i$, $i \leq n^{3/4} $} \bigr\} \llvert
\widehat{D}_1 = x_1,\ldots, \widehat{D}_n =
x_n \bigr]
\\
&&\qquad \leq \sum_{i=1}^{ n^{3/4} } \sum
_{k=1}^{x_i} \frac{x_i}{n-n^{3/4}}
\end{eqnarray*}
and
\begin{eqnarray*}
&& \E\bigl[ \# \bigl\{ \mbox{loops or multiple edges attached to
$v_i$, $i \leq n^{3/4}$, in $\mathcal{G}_n$}
\bigr\} \bigr]
\\
&&\qquad \leq \frac{1}{n-n^{3/4}} \E\Biggl[ \sum_{i=1}^{ n^{3/4}}
\widehat{D}_i^2 \Biggr]
\\
&&\qquad \leq \frac{n^{3/4}}{n-n^{3/4}} \E\bigl[ \widehat{D}_1^2 \bigr].
\end{eqnarray*}
Since $\nu$ has finite third moment, this completes the proof.\vadjust{\goodbreak}
\end{pf*}

%s7.2 #&#
\subsection{End of the proof of Theorem~\texorpdfstring{\protect\ref{jesuisallevoirfghfytu}}{3.2}} \label{cestladernisectdupap}

Let $t > 0$. Let $f$ be a bounded, continuous function defined on $(\D
([0,t]), \mathbb{R})$. It suffices to prove that
\[
\lim_{n \rightarrow\infty} \mathbb{E} \bigl[ f (
\widebar{W}_n ) \rrvert T(n) = \infty\bigr] = \mathbb{E} \bigl[ f
\bigl( {W}^\nu\bigr) \bigr], %
\]
where $f( \widebar{W}_n )$ denotes the image of the restriction of
the walk $\widebar{W}_n $ to $[0, t]$ by $f$. Let us show that
result. Observe that the event
\[
\bigl\{ \mbox{neither loop nor multiple edge is attached to $v_i$,
$i > n^{3/4}$} \bigr\} %
\]
is asymptotically independent of the r.v. $f ( \widebar{W}_n )$
\begin{eqnarray*}
\hspace*{-4pt}&&\mathbb{E} \bigl[ f ( \widebar{W}_n ) \mathbf{1}_{\mathrm{neither\
loop\ nor\ multiple\ edge\ is\ attached\ to}\ v_i, i >
n^{3/4}}
\bigr]
\\
\hspace*{-4pt}&&\quad \undersett{n \rightarrow\infty} {\sim} \mathbb{E} \bigl[ f (
\widebar{W}_n ) \bigr] \mathbb{P} \bigl(\mbox{neither loop nor
multiple edge is attached to $v_i$, $i > n^{3/4}$} \bigr).
\end{eqnarray*}
Deciding whether or not a loop or a multiple edge is created after step
$n^{3/4}$ does indeed not depend on the first $t n^{2/3}$
steps.\footnote{To make this argument rigorous, consider the
Poissonian model introduced in Section~\ref{poivguyiaeuy};
independence is then straightforward, and the fact that the two models
are asymptotically close has already been seen.}

Now, by Theorem~\ref{thetheoaldsboooo},
\[
\lim_{n \rightarrow\infty} \mathbb{E} \bigl[ f ( \widebar{W}_n
) \bigr] = \mathbb{E} \bigl[ f \bigl( {W}^\nu\bigr) \bigr].
\]
Moreover,
\fontsize{10pt}{\baselineskip}\selectfont
\begin{eqnarray*}
0 & \leq& \frac{\mathbb{P} (\mbox{neither loop nor
multiple edge is attached to $v_i$, $i > n^{3/4}$} )}{\mathbb
{P} ( T(n) = \infty)} - 1
\\
& = & \frac{\mathbb{P} ( \mbox{neither loop nor multiple
edge is attached to $v_i$, $i > n^{3/4}$ and $T(n) \leq n^{3/4}$}
)}{ \mathbb{P} ( \mbox{$\mathcal{G}_n$ is a simple
graph} )}
\\
& \leq& \frac{\mathbb{P} ( T(n) \leq n^{3/4} )}{
\mathbb{P} ( \mathcal{G}_n \mbox{ is a simple graph})}.
\end{eqnarray*}
\normalsize
According to Lemma~\ref{cestunlemmcleechrsit} and equation~(\ref
{derniequhabyvu}),
\[
\lim_{n \rightarrow\infty} \frac{\mathbb{P} ( T(n) \leq
n^{3/4} )}{ \mathbb{P} ( \mathcal{G}_n \mbox{ is a
simple graph} )} = 0. %
\]
Consequently,
\fontsize{10pt}{\baselineskip}\selectfont
\[
\mathbb{P} \bigl( \mbox{neither loop nor multiple edge is attached to
$v_i$, $i > n^{3/4}$} \bigr) \undersett{n \rightarrow\infty}
{\sim} \mathbb{P} \bigl( T(n) = \infty\bigr). %
\]
\normalsize

We finally obtain
\begin{eqnarray*}
&& \mathbb{E} \bigl[ f ( \widebar{W}_n ) \mathbf{1}_{\mathrm{neither\ loop\
nor\ multiple\ edge\ is\ attached\ to}\ v_i, i > n^{3/4}}
\bigr]
\\
&&\qquad \undersett{n \rightarrow\infty} {\sim} \mathbb{E} \bigl[ f \bigl(
{W}^\nu\bigr) \bigr] \mathbb{P} \bigl( T(n) = \infty\bigr).
\end{eqnarray*}
Recalling Lemma~\ref{cestunlemmcleechrsit} and equation~(\ref
{derniequhabyvu}) again (just proceed as before), this proves that
\[
\mathbb{E} \bigl[ f ( \widebar{W}_n ) \mathbf{1}_{T(n)=\infty}
\bigr] \undersett{n \rightarrow\infty} {\sim} \mathbb{E} \bigl[ f \bigl(
{W}^\nu\bigr) \bigr] \mathbb{P} \bigl( T(n) = \infty\bigr),
\]
completing the proof of Theorem~\ref{jesuisallevoirfghfytu}.

%s8 #&#
\section{The power law distribution setting} \label{powerlawsecr}

In this section, we do not suppose the finiteness of the moment of
order 3 for distribution $\nu$, and rather we replace assumption~(\ref
{revivalprepaa}) by
%
%
%e8.1 #&#
\begin{equation}
\label{thevrairevivalprepaa} \sum_{k=1}^{\infty} k (k-2)
\nu_k =0 \quad\mbox{and}\quad\nu_k \undersett{k
\rightarrow\infty} {\sim} c k^{-\gamma},
\end{equation}
where $c>0$ and $\gamma\in(3,4)$.
This implies that (\ref{equdebertoiiii}) has to be replaced by
%
%
%e8.2 #&#
\begin{eqnarray}
\label{equdebertoiiiidess} \mathcal{L}''(t) & = & 2 \mu-
\frac{c   \Gamma(4-\gamma
)}{\gamma-3} t^{\gamma-3} + \undersett{t \rightarrow0} {o}
\bigl(t^{\gamma-3}\bigr).
\end{eqnarray}
We are interested in the component sizes of the multigraph constructed
the same way as before. To have a good idea of what the order of the
component sizes should be, we adopt the same strategy, using Poisson
calculus; see Section~\ref{poissoniagetyv}. We shall then show in
Section~\ref{mainrespoelow} how to deduce the asymptotic behavior of
the component sizes of $\mathcal{G}_n$ in our new situation. In
Section~\ref{openprobsec} we shall state some open problems.

%s8.1 #&#
\subsection{The Poissonian argument} \label{poissoniagetyv}

Taking the same notation as in Section~\ref{sectkiuyy}, we here
consider the process $(S_n(t))_{t \geq0}$ defined by
\begin{eqnarray*}
S_n(t) & = & \sum_{(s,k) \in\Pi_n} (k-2)
\mathbf{1}_{s \leq t}.
\end{eqnarray*}
Recall that $\Pi_n$ is a Poisson point process on $(0,n) \times
\mathbb{N}^{\ast}$ with intensity $\pi_n$, where
$
\pi_n(\mathrm{d} t, k) = \nu_k k e^{-k\psi(t/n)} \psi'(t/n)\,\mathrm{d} t$.
We intend to prove the following result:
%
%
%th8.1 #&#
\begin{theoreme} \label{monpetitbbht}
Rescale $S_n$ by defining $
\widebar{S}_n(t) = n^{-1/(\gamma-1)} S_n (t n^{(\gamma-
2)/(\gamma- 1)} )$. Then
\begin{eqnarray*}
\widebar{S}_n & \mathop{\longrightarrow}\limits
_{n \rightarrow
\infty}^{(d)} &
X^{\nu}+A^{\nu}
\end{eqnarray*}
with respect to the Skorokhod topology on every finite interval,
where
\[
A^{\nu}_t = - \frac{c   \Gamma(4-\gamma)}{(\gamma
-3)(\gamma- 2) \mu^{\gamma-2}} t^{\gamma-2},\qquad t
\geq0, %
\]
and $X^{\nu}$ is the unique process with independent increments such
that for every $t \geq0$ and $u \in\mathbb{R}$,
\begin{eqnarray*}
\mathbb{E} \bigl[ \exp\bigl( i u X^{\nu}_t \bigr) \bigr] &
= & \exp\biggl( \int_0^t \mathrm{d}s \int
_0^\infty\mathrm{d}x \bigl( e^{i u x} -1 -
iux \bigr) \frac{c}{\mu} \frac{1}{x^{\gamma-1}} e^{-x s / \mu} \biggr).
\end{eqnarray*}
\end{theoreme}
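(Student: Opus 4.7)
My plan is to mirror the martingale decomposition used in Proposition~\ref{gilfoallalllavernewww}. Since $\nu$ still has finite second moment under (\ref{thevrairevivalprepaa}), the deterministic compensator $A_n(t) = \int \pi_n(k,\mathrm{d}s)(k-2)\mathbf{1}_{s\le t}$ is well-defined and $M_n = S_n - A_n$ is a purely discontinuous square-integrable martingale. Writing $\overline{S}_n = \overline{M}_n + \overline{A}_n$ and observing that the candidate limit $A^\nu + X^\nu$ has the same structure---a deterministic drift plus an independent-increments piece---I plan to establish $\overline{A}_n \to A^\nu$ and $\overline{M}_n \to X^\nu$ separately.

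First, the drift should follow from an argument entirely parallel to Lemma~\ref{babyonemofgt}, using the finer asymptotic (\ref{equdebertoiiiidess}) instead of the third-moment expansion. With $a_n(s) = \mathcal{L}''(\psi(s/n))/(-\mathcal{L}'(\psi(s/n)))$, plugging (\ref{equdebertoiiiidess}) together with $-\mathcal{L}'(x) = \mu + O(x)$ and $\psi(x) \sim x/\mu$ gives
\[
a_n(s) - 2 \; \sim \; -\frac{c\,\Gamma(4-\gamma)}{(\gamma-3)\mu^{\gamma-2}} \left(\frac{s}{n}\right)^{\gamma-3},
\]
the linear correction from $-\mathcal{L}'$ being negligible because $\gamma - 3 < 1$. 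After integrating over $s \in [0, tn^{(\gamma-2)/(\gamma-1)}]$, multiplying by $n^{-1/(\gamma-1)}$, and using the algebraic identity $(\gamma-2)^2 - (\gamma-3)(\gamma-1) = 1$, the resulting power of $n$ cancels and the limit is exactly $A^\nu(t)$; uniform convergence on compact intervals will follow from the same $\varepsilon$-argument as in Lemma~\ref{babyonemofgt}.

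The main work is $\overline{M}_n \to X^\nu$. The functional CLT for continuous-time martingales is no longer applicable, since the limit is a L\'evy-type process with infinite activity rather than a Brownian motion. The cleanest route I envisage is via characteristic functions: the Poisson structure of $\Pi_n$ and Campbell's formula give the exact identity
\[
\log \mathbb{E}\bigl[e^{iu\overline{S}_n(t)}\bigr] = \int_0^{tn^{(\gamma-2)/(\gamma-1)}} \!\! \sum_{k\ge 1}\bigl(e^{iun^{-1/(\gamma-1)}(k-2)} - 1\bigr) k e^{-k\psi(s/n)}\psi'(s/n)\nu_k\,\mathrm{d}s.
\]
I would decompose $e^{iuy}-1 = iuy + (e^{iuy} - 1 - iuy)$ to split this into $iu\,\overline{A}_n(t)$ plus a stochastic remainder, then perform the joint rescaling $x = kn^{-1/(\gamma-1)}$ of degrees and $s \mapsto sn^{-(\gamma-2)/(\gamma-1)}$ of time. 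Treating the sum on $k$ as an integral via the tail $\nu_k \sim ck^{-\gamma}$ and using $\psi(x) \sim x/\mu$ together with $\psi'(x) \to 1/\mu$ near $0$, all Jacobians cancel and the remainder should converge pointwise to
\[
\int_0^t \mathrm{d}s \int_0^\infty \mathrm{d}x\,(e^{iux} - 1 - iux)\,\frac{c}{\mu\,x^{\gamma-1}}\,e^{-xs/\mu},
\]
which is the L\'evy--Khintchine exponent of $X^\nu$. The hard part will be making this passage rigorous: the contribution of bounded $k$, where the power-law tail does not yet apply, must be separated from that of the tail, and the compensator $-iuy$ must be handled carefully near $x = 0$ since the L\'evy measure $c\,x^{-(\gamma-1)}\mathrm{d}x$ is non-integrable there. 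Once finite-dimensional convergence of characteristic functions is established, functional convergence in Skorokhod space should follow either from a direct tightness argument based on second-moment bounds for the Poisson integral, or by invoking the framework of Aldous and Limic~\cite{MR1491528} on size-biased point processes, which is specifically designed to produce convergence to processes with independent increments in exactly this regime.
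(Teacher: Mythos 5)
Your decomposition $\overline{S}_n = \overline{M}_n + \overline{A}_n$ and the treatment of the drift term are exactly what the paper does: $\overline{A}_n \to A^\nu$ is shown via the analogue of Lemma~\ref{babyonemofgt} using the expansion~(\ref{equdebertoiiiidess}), and your computation of the exponent balance is correct. For the martingale part, however, you take a genuinely different route. The paper does not compute characteristic functions of $\overline{S}_n$ directly; instead it splits $\overline{M}_n = \overline{M}_n^{(1)} + \overline{M}_n^{(2)}$ into small jumps ($k < n^{1/(\gamma-1)}$) and big jumps ($k \geq n^{1/(\gamma-1)}$), computes the predictable characteristics $(B^n, C^n, \nu^n)$ of each piece, and invokes Jacod--Shiryaev's Theorems VII.3.7 and VII.3.4 on convergence to processes with independent increments (Lemmas~\ref{babyonemofgtjjudeu} and~\ref{babyonemofgtjjutros}). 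The two pieces never jump simultaneously, hence are independent, and the sum of the two limits is $X^\nu$. The advantage of the paper's approach is that the Jacod--Shiryaev theorems deliver functional (Skorokhod) convergence in one stroke once the characteristics converge, while also structurally isolating the non-square-integrability of the L\'evy measure at $0$ (handled in the truncated small-jump piece, where compensators and quadratic characteristics stay finite) from the heavy-tail part.

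The gap in your plan is precisely at the tightness step, which you leave to two alternatives, one of which is incorrect. You cannot invoke the ``framework of Aldous and Limic'' here: their work (and the part of it used later in Theorem~\ref{levraikeytheorr}) is about converting convergence of a size-biased walk into convergence of the $\ell^2_\searrow$-sequence of excursion lengths, via the multiplicative coalescent; it says nothing about proving process-level convergence of $\overline{M}_n$ itself, which is the task at hand. Your other alternative, a direct second-moment tightness bound, is in principle feasible since $X^\nu_t$ does have finite second moments (the Gaussian component vanishes, but $\int_0^t\int_0^\infty x^2\,\frac{c}{\mu}x^{1-\gamma}e^{-xs/\mu}\,\mathrm{d}x\,\mathrm{d}s < \infty$ for $\gamma\in(3,4)$), but the modulus-of-continuity estimates near $s=0$, where the L\'evy measure has the non-integrable singularity $x^{1-\gamma}$ uncut by the exponential factor, are delicate; in effect one would end up having to truncate jumps anyway. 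It is cleaner to adopt the paper's small/big-jump split from the outset, since that is exactly the regularisation that makes both the characteristic computations and the tightness automatic via the Jacod--Shiryaev theory.
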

\begin{pf}
As before, let
\[
A_n(t) = \int_{(0,n) \times\mathbb{N}^{\ast}} \pi_n (
\mathrm{d}s, k) (k-2) \mathbf{1}_{s \leq t},\qquad t \geq0 %
\]
be the deterministic continuous bounded variation function such that
\[
M_n (t) = S_n (t) - A_n (t),\qquad t \geq0
\]
is a martingale. Just as we rescaled $S_n$ to form $\widebar{S}_n$ in
Theorem~\ref{monpetitbbht}, write $\widebar{A}_n$ and $\widebar
{M}_n$ for the correspondingly rescaled versions of $A_n$ and $M_n$.
Note that (\ref{equdebertoiiii}) was the only ingredient of the proof
of Lemma~\ref{babyonemofgt}. Since in our setting equation (\ref
{equdebertoiiiidess}) holds, we can perform the same elementary
calculations and then find that for every $t > 0$,
\[
\lim_{n \rightarrow\infty} \sup_{s \leq t} \bigl\llvert{
\widebar{A}}_n(s) -A^{\nu}_s \bigr\rrvert= 0.
\]
To complete the proof of Theorem~\ref{monpetitbbht}, it thus suffices
to show that
\[
\widebar{M}_n \mathop{\longrightarrow}\limits
_{n \rightarrow
\infty}^{(d)}
X^{\nu} %
\]
with respect to the Skorokhod topology on every finite interval.
We postpone the proof of that result to the \hyperref[app]{Appendix}; see Lemmas~\ref
{babyonemofgtjjudeu} and \ref{babyonemofgtjjutros}.
\end{pf}

%s8.2 #&#
\subsection{The main result} \label{mainrespoelow}

Repeating exactly what we did in Section~\ref{ghyttteerfeiii}, we
deduce from Theorem~\ref{monpetitbbht} the following key result. As
before, the walk defined via (\ref{defindelamarcle}) is denoted by $W_n$.
%
%
%co8.2 #&#
\begin{coro} \label{hehoilfatleciterq}
Rescale the depth-first walk $W_n$ by defining for every $t \in[0,
n^{1/(\gamma-1)}]$,
\begin{eqnarray*}
\widebar{W}_n(t) & = & n^{-1/(\gamma-1)} W_n \bigl(
\bigl\lfloor t n^{(\gamma-2)/(\gamma-1)} \bigr\rfloor\bigr).
\end{eqnarray*}
Then
\begin{eqnarray*}
\widebar{W}_n & \mathop{\longrightarrow}\limits
_{n \rightarrow
\infty}^{(d)} &
X^{\nu}+A^{\nu}
\end{eqnarray*}
with respect to the Skorokhod topology on every finite interval.
\end{coro}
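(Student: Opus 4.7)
I would mimic step by step the strategy of Sections~\ref{ghyttteerfeiii}--\ref{ghyttteerfe}, with the scaling exponents updated to $n^{-1/(\gamma-1)}$ in space and $n^{(\gamma-2)/(\gamma-1)}$ in time. The argument splits into three natural steps: transfer Theorem~\ref{monpetitbbht} from the Poissonian walk $\overline{S}_n$ to the cycle-free walk $\overline{s}_n$, then show that the cycle half-edges contribute negligibly to $\overline{W}_n - \overline{s}_n$, and finally combine via a bounded Lipschitz test-function argument.

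First I would prove the analog of Corollary~\ref{jeanferr}, namely that the cycle-free depth-first walk
$$
\overline{s}_n(t) \;=\; n^{-1/(\gamma-1)} \sum_{1 \leq j \leq t n^{(\gamma-2)/(\gamma-1)}} \bigl(\hat D_j - 2\bigr)
$$
converges in distribution to $X^{\nu}+A^{\nu}$. Starting from Theorem~\ref{monpetitbbht}, I would use Lemma~\ref{chaosprope} (which asserts that the time-projection of $\Pi_n$ is a unit-rate Poisson point process on $(0,n)$) and a standard LLN for the inverse of that Poisson process to show that, uniformly in $t$ on compacts, $n^{-(\gamma-2)/(\gamma-1)} T_n^{\lfloor t n^{(\gamma-2)/(\gamma-1)}\rfloor}\to t$. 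This converts the Poissonized walk $\overline{S}_n$ into the walk $\overline{s}_n$ indexed by integer atom counts; a depoissonization, based on the concentration of the total number of atoms around its Poisson$(n)$ mean, finishes the step since we only observe the process up to time $O(n^{(\gamma-2)/(\gamma-1)}) \ll n$.

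Next I would redo the bound of Lemma~\ref{formybertoi} verbatim with the new exponents. Defining
$$
E_n(t,M) \;=\; \Bigl\{ \max_{i \leq t}\bigl(\overline{s}_n(i) - \min_{k \leq i} \overline{s}_n(k)\bigr) \leq M \Bigr\},
$$
the number of active half-edges at any moment during the exploration of the first $\lfloor tn^{(\gamma-2)/(\gamma-1)}\rfloor$ vertices is at most $Mn^{1/(\gamma-1)} + 1$ under $E_n(t,M)$. The conditional probability that a given half-edge attached to one of these vertices pairs back to an active half-edge is $O(Mn^{1/(\gamma-1)}/n)$, and since $\mathbb{E}[\hat D_1] = \mathbb{E}[D^2]/\mu = 2$ is still finite under the criticality identity in (\ref{thevrairevivalprepaa}), summing over the $tn^{(\gamma-2)/(\gamma-1)}$ explored vertices yields
$$
\limsup_{n\to\infty}\mathbb{E}\!\left[\#\bigl\{\text{cycle half-edges attached to } v_i,\, i \leq tn^{(\gamma-2)/(\gamma-1)}\bigr\}\,\mathbf{1}_{E_n(t,M)}\right] \leq 2Mt.
$$
Since $|\overline{W}_n(s)-\overline{s}_n(s)|$ is at most $2n^{-1/(\gamma-1)}$ times the number of cycle half-edges discovered up to rescaled time $s$, this gives $\mathbb{E}[\sup_{s \leq t}|\overline{W}_n-\overline{s}_n|\mathbf{1}_{E_n(t,M)}] = O(n^{-1/(\gamma-1)}) \to 0$.

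Finally I would combine the two ingredients exactly as in Section~\ref{ghyttteerfe}: for a bounded Lipschitz $f$ on the Skorokhod space over $[0,t]$, split $\mathbb{E}[f(\overline{W}_n)]-\mathbb{E}[f(\overline{s}_n)]$ according to $E_n(t,M)$. The part on $E_n(t,M)$ vanishes by the cycle half-edge bound, while the part on the complement is bounded by $2\|f\|_{\infty}\mathbb{P}(E_n(t,M)^{c})$; passing to $\limsup_n$ via the first step and Portmanteau, then letting $M \to \infty$, kills this second piece since $X^{\nu}+A^{\nu}$ is a.s.\ bounded on $[0,t]$. The main obstacle is really the cycle half-edge estimate, which must be checked from scratch because the paper's previous version of that lemma was stated under a third-moment assumption; the calculation survives since only $\mathbb{E}[\hat D_1]$ enters and this is automatically finite in the critical power-law regime. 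A minor technical point is the non-continuity on Skorokhod space of the functional $w \mapsto \sup_{s\leq t}(w(s)-\inf_{u\leq s}w(u))$ at discontinuous paths, but since the law of that supremum has at most countably many atoms, one can take $M$ outside this set (or use a closed version of the inequality), which is harmless because the bound is monotone in $M$.
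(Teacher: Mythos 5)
Your proposal is correct and is essentially the proof the paper intends: the paper's own argument for Corollary~\ref{hehoilfatleciterq} is literally the one-line instruction to ``repeat what we did in Section~\ref{ghyttteerfeiii}'', and you have done exactly that with the exponents $n^{1/(\gamma-1)}$ and $n^{(\gamma-2)/(\gamma-1)}$ substituted throughout, correctly checking the two non-obvious points (that the cycle-half-edge bound only needs $\mathbb{E}[\hat D_1]=\mathbb{E}[D^2]/\mu=2$, hence survives the loss of the third moment, and that since $(\gamma-2)/(\gamma-1)<1$ the depoissonization step is unaffected). Your remark that one should work on the Skorokhod space $D[0,t]$ rather than $\mathcal{C}[0,t]$ because $X^{\nu}+A^{\nu}$ has jumps is a genuine and welcome refinement that the paper glosses over; the uniform-norm estimate $\sup_{s\leq t}|\overline{W}_n(s)-\overline{s}_n(s)|\leq 2n^{-1/(\gamma-1)}\,\#\{\text{cycle half-edges}\}$ dominates the Skorokhod distance, so the Lipschitz test-function argument goes through unchanged.
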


We now give an analogous result of Theorem~\ref{keytheorr} in the
present setting. Let $R^{\nu}$ be the reflected process defined by
\[
R^{\nu}_t = X^{\nu}_t+A^{\nu}_t
- \inf_{0 \leq s \leq t} \bigl\{ X^{\nu}_s+A^{\nu}_s
\bigr\},\qquad t \geq0. %
\]
We define excursion intervals and excursion lengths of $R^{\nu}$ as in
Section~\ref{sectionfrretwo}.
%
%
%th8.3 #&#
\begin{theoreme} \label{levraikeytheorr}
Suppose $\nu$ satisfies (\ref{thevrairevivalprepaa}). Then a.s.
the set of excursions of $R^{\nu}$ may be written $\{ \gamma_j, j
\geq1 \}$ so that the lengths $|\gamma_j|$ are decreasing and
\[
\sum_{j \geq1} |\gamma_j|^2 <
\infty, %
\]
and letting $\bolds{\mathcal{C}}^{\nu}_n$ be the ordered
sequence of component sizes of $\mathcal{G}_n$,
\begin{eqnarray*}
n^{-(\gamma-2)/(\gamma-1)} \bolds{\mathcal{C}}^{\nu}_n &
\mathop{\longrightarrow}\limits
_{n \rightarrow\infty}^{(d)} & \bigl( \llvert
\gamma_j \rrvert, j \geq1 \bigr)
\end{eqnarray*}
with respect to the $l_{\searrow}^2$ topology.
\end{theoreme}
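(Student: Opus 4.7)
The plan is to follow the scheme developed for the finite-third-moment setting, replacing Aldous' framework \cite{MR1434128} by the more general Aldous--Limic framework \cite{MR1491528}, which was designed precisely to handle limits of the form ``drifted process with independent increments''. Concretely, one deduces Theorem~\ref{levraikeytheorr} from Corollary~\ref{hehoilfatleciterq} in the same way that Theorem~\ref{keytheorr} was deduced from Theorem~\ref{thetheoaldsboooo}: property (\ref{cmechantmsc}) of the depth-first walk identifies rescaled component sizes with lengths of path segments of $\overline{W}_n$ between consecutive even past-minima, and in the limit these segments coincide with the excursions of $R^{\nu}$ above $0$.

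A preliminary step is to establish the almost-sure existence, countability, and $l_{\searrow}^2$-summability of the excursions of $R^{\nu}$. The drift $A^{\nu}(t) = -c_{\gamma} t^{\gamma-2}$ grows super-linearly (since $\gamma - 2 > 1$), which forces the past infimum of $X^{\nu} + A^{\nu}$ to decrease to $-\infty$, so the open set $\{t : R^{\nu}(t) > 0\}$ decomposes into countably many finite excursion intervals that can be listed by decreasing length. Square-summability is the exact analog of Aldous' Lemma 25 and is covered by Aldous--Limic; what must be checked is that $X^{\nu} + A^{\nu}$ belongs to their class of ``good'' processes, \emph{i.e.}, that the L\'evy measure $\frac{c}{\mu} x^{-(\gamma-1)} e^{-xs/\mu} {\rm d}s \, {\rm d}x$ and the drift $A^{\nu}$ satisfy Aldous--Limic's integrability and growth hypotheses.

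The second step is the $l_{\searrow}^2$-convergence of the sorted excursion lengths. Functional convergence of $\overline{W}_n$ to $X^{\nu}+A^{\nu}$ (Corollary~\ref{hehoilfatleciterq}) already delivers convergence of the excursions truncated to any compact time window $[0,T]$, via the continuous mapping theorem applied at continuity points of the limit. To upgrade this to the full $l_{\searrow}^2$-convergence, one exploits the key structural fact (emphasized in Section~\ref{boulettemsckk}) that components appear in size-biased order during the depth-first search, so that long components are discovered early. This reduces the matter to a uniform tail estimate: for every $\varepsilon > 0$, there exist $T > 0$ and $n_0 \in \mathbb{N}$ such that for all $n \geq n_0$, with probability at least $1 - \varepsilon$, the total $\ell^2$-contribution of components discovered after time $T n^{(\gamma-2)/(\gamma-1)}$ in the depth-first search is at most $\varepsilon$.

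The main obstacle is precisely this uniform $l_{\searrow}^2$-tail estimate. In the Brownian case of Aldous \cite{MR1434128}, it rests on sharp Gaussian tail bounds for excursion lengths and a delicate second-moment computation on the post-$T$ components. In the power-law case one no longer has Gaussian tails: the heavy-tailed jumps of $X^{\nu}$ produce genuinely different fluctuations, and the bound must instead come from the strong super-linear drift $A^{\nu}$ together with the Aldous--Limic tail estimates in \cite{MR1491528}. Once this tightness is established, combining it with the finite-horizon continuous-mapping argument yields the required $l_{\searrow}^2$-convergence and completes the proof of Theorem~\ref{levraikeytheorr}.
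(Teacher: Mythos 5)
Your high-level plan is on target: deduce the theorem from Corollary~\ref{hehoilfatleciterq} by appealing to the Aldous--Limic framework \cite{MR1491528}, which is exactly what the paper does (invoking their Proposition 17 and Proposition 14, using that components appear in size-biased order). However, there is a genuine gap in what you then declare to be the ``main obstacle.'' You frame the remaining work as a finite-$n$ uniform $\ell^2$-tail estimate for components discovered after time $T n^{(\gamma-2)/(\gamma-1)}$. But Aldous--Limic's machinery is precisely designed so that one does \emph{not} prove such a finite-$n$ estimate directly: Proposition 17 converts the problem into verifying a short list of conditions on the \emph{limit process} $R^{\nu}$ alone (corresponding to their Proposition 14). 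Concretely, the paper must establish, for the limit: (i) $X^{\nu}_t + A^{\nu}_t \to -\infty$ in probability; (ii) the supremum of lengths of excursions starting after time $t$ tends to $0$ in probability; (iii) the zero set of $R^{\nu}$ has no isolated points a.s.; (iv) $\mathbb{P}(R^{\nu}_t = 0) = 0$ for each $t > 0$. These four assertions are Lemma~\ref{lederdesderheeh}, and they are the substance of the proof.

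Your proposal never names these conditions and does not verify any of them. Each one requires a distinct, non-routine argument that is not a trivial consequence of ``the drift is super-linear.'' Assertion (i) needs a second-moment bound on the small-jump martingale $X^{(1)}$ combined with a first-moment bound on the big-jump part $X^{(2)}$ via the underlying Poisson point process. Assertion (ii) is a Borel--Cantelli argument requiring a careful comparison of the polynomial drift increment $A^{\nu}_{(n+1)\varepsilon} - A^{\nu}_{n\varepsilon} \asymp n^{\gamma-3}$ against fluctuations of both $X^{(1)}$ (controlled on a good event $C^{s_0}$) and $X^{(2)}$ (controlled by the intensity's exponential decay in $s$). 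Assertion (iii) requires coupling $X^{\nu}$ from above with a spectrally positive $(\gamma-2)$-stable Lévy process $L$ and using that such a process immediately enters the negative half-line. Assertion (iv) requires a coupling from \emph{below} with another Lévy process $Q^{(1)}$ on $[0,t]$ and then an appeal to \cite[Theorem VII.2]{MR1406564} to rule out $X^{\nu}_t + A^{\nu}_t$ being the running infimum. Saying ``the bound must instead come from the strong super-linear drift $A^{\nu}$ together with the Aldous--Limic tail estimates'' identifies the right ingredients in spirit but omits the actual mathematical content; without the couplings and the Borel--Cantelli estimate, the proof is not complete.
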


Contrary to the finite third moment case, Theorem~\ref
{levraikeytheorr} cannot been seen as a straightforward consequence of
Corollary~\ref{hehoilfatleciterq}; the analogy of Section~3.4 of \cite
{MR1434128} does not exist here. The following lemma (which uses
Corollary~\ref{hehoilfatleciterq}) will nonetheless enable us to get
Theorem~\ref{levraikeytheorr}. We refer to Section~\ref{deuxpart} for
the definitions of $\zeta(n, k)$ and $\mathcal{C}(n,k)$.

%
%
%le8.4 #&#
\begin{lemme} \label{encoreunle}
For every positive integer $n$, let $\Xi^{(n)}$ be the point process
\[
\Xi^{(n)} = \bigl\{ \bigl( n^{-(\gamma-2)/(\gamma-1)} \zeta(n,k-1),
n^{-(\gamma-2)/(\gamma-1)} \mathcal{C}(n,k) \bigr)\dvtx k \geq1 \bigr\},
\]
and let $\Xi^{(\infty)}$ be the point process
\[
\Xi^{(\infty)} = \bigl\{ \bigl(l(\gamma), |\gamma|\bigr)\dvtx \gamma
\mbox{ is an excursion of } R^\nu\bigr\}. %
\]
Then $\Xi^{(n)}$ converges vaguely in distribution to $\Xi^{(\infty
)}$ as $n \rightarrow\infty$.\footnote{Vague convergence of counting
measures on $[0, \infty) \times(0, \infty)$ is considered here.}
Moreover, $\Xi^{(\infty)}$ satisfies the following three points:
\begin{longlist}[(3)]
\item[(1)] $\sup{}\{ s\dvtx (s,y) \in\Xi^{(\infty)} \mbox{ for
some } y \} = \infty$ a.s.;
\item[(2)] if $(s,y) \in\Xi^{(\infty)}$, then $\sum_{(s',y') \in
\Xi^{(\infty)}\dvtx s'<s} y' = s$ a.s.;
\item[(3)] $\max{}\{ y\dvtx (s,y) \in\Xi^{(\infty)} \mbox{ for
some } s > s_0 \} \stackrel{p}{\rightarrow} 0$ as $s_0 \rightarrow
\infty$.
\end{longlist}
\end{lemme}
\begin{pf}
Observe that the component sizes of the multigraph $\mathcal{G}_n$, in
the order of appearance in depth-first walk, are size-biased ordered.
Following the \emph{proof} of~\cite{MR1491528}, Proposition~17,
Lemma~\ref{encoreunle} thus derives from Corollary~\ref
{hehoilfatleciterq} and forthcoming Lemma~\ref{lederdesderheeh} stated
in the \hyperref[app]{Appendix}.
\end{pf}

\begin{pf*}{Proof of Theorem~\ref{levraikeytheorr}}
Applying \cite{MR1491528}, Proposition~17 (see also \cite{MR1434128},
Proposition~15 and Lemma~25), Lemma~\ref{encoreunle} ensures that
a.s. the set of excursions of $R^{\nu}$ can be written $\{ \gamma
_j, j \geq1 \}$ so that the lengths $|\gamma_j|$ are decreasing and
\[
\sum_{j \geq1} |\gamma_j|^2 <
\infty. %
\]
By \cite{MR1491528}, Proposition~17, another consequence of Lemma~\ref
{encoreunle} is that
\begin{eqnarray*}
n^{-(\gamma-2)/(\gamma-1)} \bolds{\mathcal{C}}^{\nu}_n &
\mathop{\longrightarrow}\limits
_{n \rightarrow\infty}^{(d)} & \bigl( \llvert
\gamma_j \rrvert, j \geq1 \bigr)
\end{eqnarray*}
with respect to the $l_{\searrow}^2$ topology.
\end{pf*}

%s8.3 #&#
\subsection{Open questions} \label{openprobsec}

The argument used to prove Lemma~\ref{cestunlemmcleechrsit} does not
work in our present setting. Observe though that (\ref
{derniequhabyvu}) still holds here. That is why we believe that the
following result is true:
%
%
%co8.5 #&#
\begin{conjecture} \label{ceciestmanecbei}
Suppose $\nu$ satisfies (\ref{thevrairevivalprepaa}).
Let $\bolds{\mathcal{SC}}^{\nu}_n$ be the ordered sequence of
component sizes of $\mathcal{SG}_n$ and $( \llvert\gamma_j \rrvert,
j \geq1 )$ be the ordered sequence of the excursion lengths of $R^\nu
$. Then
\begin{eqnarray*}
n^{-(\gamma-2)/(\gamma-1)} \bolds{\mathcal{SC}}^{\nu}_n &
\mathop{\longrightarrow}\limits
_{n \rightarrow\infty}^{(d)} & \bigl( \llvert
\gamma_j \rrvert, j \geq1 \bigr)
\end{eqnarray*}
with respect to the $l_{\searrow}^2$ topology.
\end{conjecture}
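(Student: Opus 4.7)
The plan is to adapt the two-step argument of Section~\ref{christinagoldlousub} to the power-law setting by replacing the exponent $3/4$ with some $\alpha$ in the (non-empty) range $((\gamma-2)/(\gamma-1),\,1)$. First I would establish the analogue of Lemma~\ref{cestunlemmcleechrsit}, namely $\lim_n \mathbb{P}(T(n) > n^\alpha) = 1$. Given this, the proof of Theorem~\ref{jesuisallevoirfghfytu} carried out in Section~\ref{cestladernisectdupap} transports essentially verbatim with $n^{3/4}$ replaced by $n^\alpha$: Corollary~\ref{hehoilfatleciterq} supplies the convergence of the rescaled walk, the lower bound (\ref{derniequhabyvu}) still holds in this setting as noted by the author at the start of Section~\ref{openprobsec}, and the asymptotic independence between $\overline{W}_n|_{[0,t]}$ (which only probes vertices $v_1,\dots,v_{\lfloor tn^{(\gamma-2)/(\gamma-1)}\rfloor}$) and the event $\{T(n) > n^\alpha\}$ can again be made rigorous through the Poissonian model, exactly as in the footnote of Section~\ref{cestladernisectdupap}.

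The main obstacle is the analogue of Lemma~\ref{cestunlemmcleechrsit}. The direct argument breaks down because the same bookkeeping yields
\begin{equation*}
\mathbb{E}\bigl[\#\{\text{loops or multiple edges attached to $v_i$, $i \leq n^\alpha$}\}\bigr] \;\leq\; \frac{1}{n-n^\alpha}\,\mathbb{E}\!\left[\sum_{i=1}^{\lfloor n^\alpha \rfloor}\hat{D}_i^2\right],
\end{equation*}
and the crude estimate $n^\alpha \mathbb{E}[\hat{D}_1^2] = n^\alpha \mathbb{E}[D^3]/\mu$ is useless since $\mathbb{E}[D^3] = \infty$ under (\ref{thevrairevivalprepaa}). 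The saving feature is that $(\hat{D}_i)$ is a \emph{size-biased} reordering of i.i.d.\ copies of $D$, so the very large degrees concentrate at small indices and $\mathbb{E}\bigl[\sum_{i=1}^{\lfloor n^\alpha\rfloor}\hat{D}_i^2\bigr]$ grows strictly sublinearly in $n$. The Poissonian framework of Section~\ref{sectkiuyy} is the natural tool to quantify this: depoissonizing as in the proof of Corollary~\ref{jeanferr}, the problem reduces to estimating
\begin{equation*}
\int \pi_n(k,\mathrm{d}s)\,k^2\,\mathbf{1}_{s\leq n^\alpha} \;=\; \int_0^{n^\alpha} \psi'(s/n)\bigl(-\mathcal{L}'''(\psi(s/n))\bigr)\,\mathrm{d}s.
\end{equation*}

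Differentiating (\ref{equdebertoiiiidess}) gives $-\mathcal{L}'''(x) \sim c\,\Gamma(4-\gamma)\,x^{\gamma-4}$ as $x \to 0^+$, and combined with $\psi'(x)\to 1/\mu$ and $\psi(x)\sim x/\mu$, the integrand is asymptotic to a constant multiple of $n^{4-\gamma} s^{\gamma-4}$. Integrating over $[0, n^\alpha]$ (the singularity at $s=0$ poses no problem since $\gamma > 3$) yields an estimate of order $n^{1-(1-\alpha)(\gamma-3)}$, whence
\begin{equation*}
\mathbb{E}\bigl[\#\{\text{loops or multiple edges attached to $v_i$, $i \leq n^\alpha$}\}\bigr] \;=\; O\!\left(n^{-(1-\alpha)(\gamma-3)}\right) \;\underset{n\to\infty}{\longrightarrow}\; 0
\end{equation*}
because $\gamma > 3$ and $\alpha < 1$. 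Markov's inequality then furnishes Step~1, and the conjecture follows. The most delicate point I expect to be the depoissonization itself: one must argue that the first $\lfloor n^\alpha\rfloor$ atoms of $\Pi_n$ are visited before time $(1+o(1))n^\alpha$ with high probability (which should follow from Lemma~\ref{chaosprope} and standard Poisson concentration), and that the contribution of atoms with $s \in [n^\alpha, (1+o(1))n^\alpha]$ to the expected sum of squared degrees is negligible compared to the leading-order estimate above.
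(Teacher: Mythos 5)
The statement you are proving is presented in the paper only as Conjecture~\ref{ceciestmanecbei}; the author explicitly notes that ``the argument used to prove Lemma~\ref{cestunlemmcleechrsit} does not work in our present setting'' and offers no proof, so there is nothing in the paper to compare against. Your plan is sensible and your key new input is genuinely useful: the Poissonized estimate
$\int \pi_n(k,\mathrm{d}s)\,k^2\,\mathbf{1}_{s\leq n^\alpha} \sim \frac{c\,\Gamma(4-\gamma)}{(\gamma-3)\mu^{\gamma-3}}\,n^{1-(1-\alpha)(\gamma-3)}$
is correctly computed, and it does beat the useless crude bound $n^\alpha\mathbb{E}[\hat D_1^2]$. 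One technical point to clean up: you obtain the asymptotics of $\mathcal{L}'''$ by ``differentiating (\ref{equdebertoiiiidess})'', which is not formally legitimate; instead derive $-\mathcal{L}'''(x)=\sum_k k^3 e^{-kx}\nu_k \sim c\,\Gamma(4-\gamma)\,x^{\gamma-4}$ directly by an Abelian argument from $\nu_k\sim c k^{-\gamma}$.

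The substantive gap is the claim that, once $\mathbb{P}(T(n)>n^\alpha)\to 1$ is in hand, everything ``transports essentially verbatim.'' First, the per-half-edge probability bound you inherit from Lemma~\ref{cestunlemmcleechrsit} deserves scrutiny in the power-law regime: the ``$\frac{k-1}{n-n^\alpha}$'' estimate for a multiple edge implicitly treats each already-adjacent vertex as if it had a bounded number of living half-edges, whereas in the depth-first exploration the $k$-th half-edge of $v_i$ creates a multi-edge precisely when it is paired to a half-edge of an \emph{active ancestor} $a$ already adjacent to $v_i$, and $a$ may have up to $d_a-O(1)$ living half-edges. Under (\ref{thevrairevivalprepaa}) the maximal degree is of order $n^{1/(\gamma-1)}$ and the natural replacement bound brings in products of heavy-tailed quantities (effectively $\sum_j D_j^2$ together with $\sum_{i\le n^\alpha}\hat D_i^2$) whose joint expectation involves moments of order four and five of $D$, which are infinite for $\gamma<4$; the simple $\frac{1}{n-n^\alpha}\mathbb{E}\big[\sum_{i\le n^\alpha}\hat D_i^2\big]$ bound does not obviously survive. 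Second, the asymptotic-independence step of Section~\ref{cestladernisectdupap} is already only sketched in the paper (via a footnote), and you should not assume it carries over without rewriting it in the Poissonized setting for this regime. So your proposal is a promising route and its central estimate is correct, but as written it does not close the argument; this is consistent with the paper's decision to leave the statement as a conjecture.
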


As before, Conjecture~\ref{ceciestmanecbei} would be proven as soon as
the following result is shown:
%
%
%co8.6 #&#
\begin{conjecture}
If $\nu$ satisfies (\ref{thevrairevivalprepaa}), then the rescaled
walk $\widebar{W}_n$ conditioned on the event $\{ \mathcal
{G}_n$ is simple$\}$ converges in distribution to $X^{\nu}+A^{\nu}$
with respect to the Skorokhod topology on every finite interval as $n
\rightarrow\infty$.
\end{conjecture}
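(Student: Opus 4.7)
My plan is to mimic the two-step proof of Section~\ref{christinagoldlousub}. The first step is to establish an analog of Lemma~\ref{cestunlemmcleechrsit}: find a threshold $T_n = n^b$ with $b \in ((\gamma-2)/(\gamma-1), 1)$ such that
$$
\mathbb{P}\bigl(\textrm{some $v_i$ with $i \leq T_n$ carries a loop or a multi-edge}\bigr) \underset{n \to \infty}{\longrightarrow} 0.
$$
Once this is in hand, the second step is a routine transcription of Section~\ref{cestladernisectdupap}: decompose $\mathbf{1}_{\{\mathcal{G}_n \textrm{ simple}\}} = \mathbf{1}_{A_n}\,\mathbf{1}_{B_n}$ with $A_n = \{\textrm{no loop/multi-edge at } v_i,\, i \leq T_n\}$ and $B_n$ the analogous event for $i > T_n$; use the Poissonian construction of Section~\ref{poissoniagetyv} to argue that $B_n$ is asymptotically independent of $\overline{W}_n\vert_{[0,t]}$; then invoke (\ref{derniequhabyvu}) and Corollary~\ref{hehoilfatleciterq} to divide out $\mathbb{P}(\mathcal{G}_n \textrm{ simple})$.

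\textbf{Addressing the main obstacle.} The proof of Lemma~\ref{cestunlemmcleechrsit} rests on $\mathbb{E}[\hat D_1^2] \leq \mathbb{E}[D^3]/\mu < \infty$, which fails under (\ref{thevrairevivalprepaa}). To bypass this, I would split the expected count according to the vertex degree. For the contribution from vertices with $\hat D_i > n^{1-b}$ (those for which the size-biased inclusion probability in the first $T_n$ positions is close to $1$), the expected number of loops/multi-edges is crudely bounded by $(n-T_n)^{-1}\sum_{j:\, D_j > n^{1-b}} D_j^2$, whose expectation is $\lesssim n^{-(1-b)(\gamma-3)} \to 0$ for any $b<1$. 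For the remaining contribution, I would restrict to the high-probability event
$$
\mathcal{F}_n := \Bigl\{\textstyle\sum_{j=1}^n D_j^3 \leq C\, n^{3/(\gamma-1)}\Bigr\},
$$
which holds with probability tending to $1$: the order statistics of an i.i.d.\ power-law sample satisfy $D_{(k)} \lesssim (n/k)^{1/(\gamma-1)}$ uniformly in $k$, and $\sum_k k^{-3/(\gamma-1)}$ converges because $\gamma < 4$. On $\mathcal{F}_n$, using $\mathbb{E}[\hat D_i^2 \mid D_1, \dots, D_n] \lesssim \sum_j D_j^3/S$ in size-biased order, the expected number of loops/multi-edges coming from vertices with $\hat D_i \leq n^{1-b}$ is $\lesssim T_n\,n^{(5-2\gamma)/(\gamma-1)}$, which tends to zero whenever $b < (2\gamma-5)/(\gamma-1)$.

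\textbf{Completing the argument.} The admissible range of exponents is $b \in \bigl((\gamma-2)/(\gamma-1),\, (2\gamma-5)/(\gamma-1)\bigr)$, an interval which is nonempty precisely because $\gamma > 3$. Picking any such $b$ yields the desired probabilistic bound, establishing the analog of Lemma~\ref{cestunlemmcleechrsit}. The subsequent asymptotic independence argument then runs as in Section~\ref{cestladernisectdupap}: at scale $tn^{(\gamma-2)/(\gamma-1)} \ll T_n$, the walk $\overline{W}_n$ is determined by the half-edges of the first $T_n$ vertices, while the event $B_n$ concerns a disjoint collection of half-edges, so the Poissonian construction of Section~\ref{poissoniagetyv} yields the required asymptotic independence; combined with Corollary~\ref{hehoilfatleciterq} and (\ref{derniequhabyvu}) this gives the limit $X^\nu + A^\nu$. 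The hard part should be to tighten the heuristic $\mathbb{E}[\hat D_i^2 \mid D] \lesssim \sum_j D_j^3/S$ into a rigorous size-biased-without-replacement estimate uniform in $i \leq T_n$, which ought to follow from a direct analysis of the conditional law of the $i$-th pick together with the crude bound $\sum_{k<i}\hat D_k \leq T_n \cdot \max_j D_j \ll S$ valid on $\mathcal{F}_n$.
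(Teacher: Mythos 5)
First, a crucial contextual point: the statement you are proving is stated in the paper as a \emph{conjecture}, with no proof given --- Section~\ref{openprobsec} explicitly says that the argument used for Lemma~\ref{cestunlemmcleechrsit} breaks down under (\ref{thevrairevivalprepaa}) and that the author merely \emph{believes} the result to be true. So there is no proof in the paper to compare against; I can only assess your attempt on its own terms.

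Your strategy --- pick an intermediate scale $T_n = n^b$ with $(\gamma-2)/(\gamma-1) < b < 1$, split the loop/multi-edge count by degree, and control the small-degree contribution via a truncated third-moment event --- is the natural line of attack and the arithmetic of your exponents checks out: $b + (5-2\gamma)/(\gamma-1) < 0$ precisely when $b < (2\gamma-5)/(\gamma-1)$, and that window is nonempty iff $\gamma \in (3,4)$. However, there is a concrete error exactly at the point you yourself flag as ``the hard part''. You propose to justify $\mathbb{E}[\hat D_i^2 \mid D] \lesssim \sum_j D_j^3/S$ uniformly in $i \leq T_n$ via ``the crude bound $\sum_{k<i}\hat D_k \leq T_n\max_j D_j \ll S$ valid on $\mathcal{F}_n$.'' But on $\mathcal{F}_n$ one only gets $\max_j D_j \lesssim n^{1/(\gamma-1)}$, hence $T_n\max_j D_j \asymp n^{\,b + 1/(\gamma-1)}$, while $S \asymp \mu n$; so $T_n\max_j D_j / S \asymp n^{\,b - (\gamma-2)/(\gamma-1)} \to \infty$ precisely when $b > (\gamma-2)/(\gamma-1)$, which is the regime you need. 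The crude bound therefore asserts the \emph{opposite} of what is true, and the size-biased without-replacement estimate is left unjustified. (The desired inequality $\sum_{k<T_n}\hat D_k \leq S/2$ with high probability is almost certainly true, but needs a genuinely sharper argument than $T_n \max_j D_j$ --- for instance a direct computation of $\mathbb{E}\bigl[\sum_{k<T_n}\hat D_k \mid D\bigr]$ via the exponential-clock representation, splitting on $D_j \lessgtr n^{1-b}$, which gives order $T_n + n^{1+(1-b)(2-\gamma)} = o(n)$.)

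Two smaller issues. First, the event $\mathcal{F}_n$ does \emph{not} have probability tending to $1$ for a fixed $C$: $D_{(1)}/n^{1/(\gamma-1)}$ has a non-degenerate Fr\'echet limit, so $\mathbb{P}(\mathcal{F}_n^c) \to \mathbb{P}(\text{Fr\'echet}^3 > c'C) > 0$. You need either a slowly growing $C = C_n$ (which your polynomial margin does permit) or the ``for every $\varepsilon$ there is a $C_\varepsilon$'' formulation. Second, the asymptotic independence step in Section~\ref{cestladernisectdupap} is itself quite informal (a footnote), and it is not obvious that it transfers unchanged when the walk limit has jumps; you take it on faith, which is consistent with the paper's level of rigor but does not close the conjecture. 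Because of the gap in the size-biased bound, what you have written is an encouraging outline rather than a proof.
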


\setcounter{equation}{0}
%sA #&#
\begin{appendix}\label{app}
%sA #&#
\section{The finite third moment setting}

In this section, we complete the proof of Proposition~\ref
{gilfoallalllavernewww} by showing two technical results.

%
%
%leA.1 #&#
\begin{lemme} \label{babyonemofgt}
For every $t_0 > 0$,
\[
\lim_{n \rightarrow\infty} \sup_{t \leq t_0} \biggl\llvert{
\widebar{A}}_n(t) +\frac{\beta}{2 \mu^2} t^2 \biggr\rrvert=
0. %
\]
\end{lemme}
\begin{pf}
By definition,
\begin{eqnarray*}
A_n(t) & = & \int_0^{t} \sum
_{k \in\N^{\ast}} \bigl(k^2 - 2 k\bigr)
e^{-k\psi(s/n)} \psi'(s/n) \nu_k \,\mathrm{d} s
\\
& = & \int_0^t \bigl(a_n(s) -2
\bigr) \,\mathrm{d}s,
\end{eqnarray*}
where
\[
a_n(s) = \frac{\mathcal{L}''(\psi(s/n))}{- \mathcal{L}'(\psi(s/n))}. %
\]
Since $\E[D^2] = 2 \E[D]$, $a_n(s)$ tends to 2 as $n \rightarrow
\infty$. Moreover, it is easily seen by approximating $\psi(s/n)$ by
$\frac{s}{\mu n}$ that $a_n(s)-2$ is approximatively $-\frac{\beta}{
\mu^2} \frac{s}{n}$. Let us be more precise. Recalling (\ref
{equdebertoiiii}), in the neighborhood of $t=0$,
\[
\mathcal{L}''(t) = 2 \mu- (\beta+ 4 \mu) t + o(t)
\quad\mbox{and}\quad\mathcal{L}'(t) = - \mu+2 \mu t + o(t).
\]
Therefore
\[
\frac{\mathcal{L}''(t) + 2 \mathcal{L}'(t)}{- \mathcal{L}'(t)} = -\frac
{\beta}{\mu} t + o(t), %
\]
that is, there exists a function $\varepsilon^{(1)}(\cdot)$ tending
to 0 at 0 such that
\[
\frac{\mathcal{L}''(t)}{- \mathcal{L}'(t)} - 2 = -\frac{\beta}{\mu
} t + t \varepsilon^{(1)}(t).
\]
Now, $\lb(t) = \frac{t}{\mu} + o(t)$ so that there exists a function
$\varepsilon^{(2)}(\cdot)$ tending to 0 at 0 such that
\[
\psi(t) = \frac{t}{\mu} + t \varepsilon^{(2)}(t). %
\]
We deduce that
\begin{eqnarray*}
a_n(s) - 2 & = & -\frac{\beta}{\mu} \psi\biggl( \frac{s}{n}
\biggr) + \psi\biggl( \frac{s}{n} \biggr) \varepsilon^{(1)}
\biggl( \psi\biggl( \frac{s}{n} \biggr) \biggr)
\\
& = & -\frac{\beta}{\mu} \biggl( \frac{s}{\mu n} + \frac{s}{n}
\varepsilon^{(2)} \biggl( \frac{s}{n} \biggr) \biggr)
\\
&&{}+ \biggl(
\frac{s}{\mu n} + \frac{s}{n} \varepsilon^{(2)} \biggl(
\frac{s}{n} \biggr) \biggr) \varepsilon^{(1)} \biggl(
\frac{s}{\mu n} + \frac
{s}{n} \varepsilon^{(2)} \biggl(
\frac{s}{n} \biggr) \biggr)
\\
& = & -\frac{\beta}{\mu^2} \frac{s}{ n} + \frac{s}{ n} \biggl\{ -
\frac{\beta}{\mu} \varepsilon^{(2)} \biggl( \frac{s}{n} \biggr) +
\frac{1}{\mu} \varepsilon^{(1)} \biggl( \frac{s}{\mu n} +
\frac
{s}{n} \varepsilon^{(2)} \biggl( \frac{s}{n} \biggr)
\biggr)
\\
&&\hspace*{87pt}{} + \varepsilon^{(2)} \biggl( \frac{s}{n} \biggr)
\varepsilon^{(1)} \biggl( \frac{s}{\mu n} + \frac{s}{n}
\varepsilon^{(2)} \biggl( \frac{s}{n} \biggr) \biggr) \biggr\}.
\end{eqnarray*}
Defining
\[
\varepsilon\dvtx t \mapsto- \frac{\beta}{\mu} \varepsilon^{(2)} ( t ) +
\frac{1}{\mu} \varepsilon^{(1)} \biggl( \frac{t}{\mu} + t
\varepsilon^{(2)} ( t ) \biggr) + \varepsilon^{(2)} ( t )
\varepsilon^{(1)} \biggl( \frac{t}{\mu} + t \varepsilon^{(2)}
( t ) \biggr), %
\]
we finally get
\[
a_n(s) - 2= -\frac{\beta}{\mu^2} \frac{s}{n} +
\frac
{s}{n}\varepsilon\biggl( \frac{s}{n} \biggr) %
\]
with $\varepsilon(\cdot)$ tending to 0 at 0. Thus, for every $t \in
[0, t_0 n^{2/3}]$,
\[
\biggl\llvert A_n(t) + \frac{\beta}{ \mu^2} \frac{t^2}{2 n} \biggr
\rrvert\leq\frac{1}{n} \int_0^{t} s
\biggl\llvert\varepsilon\biggl( \frac{s}{n} \biggr) \biggr\rrvert
\,\mathrm{d}s \leq\frac{1}{n} \int_0^{t_0 n^{2/3}} s
\biggl\llvert\varepsilon\biggl( \frac{s}{n} \biggr) \biggr\rrvert\,
\mathrm{d}s. %
\]
As a result, for every $\eta> 0$, there exists an integer $n_0(\eta)$
such that for every integer $n \geq n_0(\eta)$,
\[
\sup_{t \leq t_0 n^{2/3}} \biggl\llvert A_n(t) +
\frac{\beta}{2 \mu^2} \frac{t^2}{n} \biggr\rrvert\leq\frac{1}{n} \int
_0^{t_0 n^{2/3}} s \eta\,\mathrm{d}s = \frac{t_0^2}{2}
\eta n^{1/3}, %
\]
which proves Lemma~\ref{babyonemofgt}.
\end{pf}

%
%
%leA.2 #&#
\begin{lemme} \label{babyonemofgtjju}
$\widebar{M}_n \mathop{\longrightarrow}\limits_{n
\rightarrow\infty}^{(d)} \sqrt{\frac{\beta}{ \mu}} B$ with
respect to the Skorokhod topology on every finite interval, where $B$
denotes a standard Brownian motion.
\end{lemme}
\begin{pf}
We want to apply the functional CLT for continuous-time martingales.
Since $A_n$ is continuous, and $S_n$ only jumps at points $t_j$,
$M_n$ is a purely discontinuous martingale, so that
$
[ M_n ]_t = \sum_{s \leq t} \Delta M_n(s)^2
$
and its predictable projection
\[
\langle M_n \rangle(t) = \int_{(0,n) \times\mathbb{N}^{\ast}} \pi
_n (\mathrm{d}s, k) (k-2)^2 \mathbf{1}_{s \leq t},\qquad t \geq0, %
\]
is the continuous, increasing process such that $M_n^2 - \langle M_n
\rangle$ is a martingale. Observe that $\langle M_n \rangle$ is
deterministic. Define $\langle\widebar{M}_n \rangle(t) = n^{-2/3}
\langle M_n \rangle(t n^{2/3})$. Applying \cite{MR838085}, Theorem~7.1.4(b), the following two points imply Lemma~\ref
{babyonemofgtjju}:
%
%
%eA.1 #&#
\begin{equation}
\label{condalddeux} \forall t_0 > 0\qquad{\langle
\widebar{M}_n \rangle}(t_0) \undersett{n \rightarrow\infty} {
\longrightarrow} \frac{\beta}{ \mu} t_0
\end{equation}
and
%
%
%eA.2 #&#
\begin{equation}
\label{condaldtrois} \lim_{n \rightarrow\infty} \E\Bigl[ \sup_{t \leq t_0}
\bigl\llvert\widebar{M}_n(t) - \widebar{M}_n(t-) \bigr
\rrvert^2 \Bigr] = 0.
\end{equation}

Let us establish (\ref{condalddeux}). First note that
\begin{eqnarray*}
\langle M_n \rangle(t) & = & \int_0^{t}
\sum_{k \in\N^{\ast}} k(k - 2)^2 e^{-k\psi(s/n)}
\psi'(s/n) \nu_k \,\mathrm{d} s
\\
& = & \int_0^t b_n(s)\,
\mathrm{d}s,
\end{eqnarray*}
where
\[
b_n(s) = \frac{\mathcal{L}^{(3)} + 4\mathcal{L}'' + 4\mathcal
{L}'}{\mathcal{L}'} \circ\psi\biggl( \frac{s}{n}
\biggr). %
\]
Since $\psi(t)$ tends to 0 as $t \rightarrow0$ and
\[
\lim_{t \rightarrow0} \frac{\mathcal{L}^{(3)}(t) + 4\mathcal
{L}''(t) + 4\mathcal{L}'(t)}{\mathcal{L}'(t)} = \frac{-(\beta+4\mu
)+ 8 \mu- 4 \mu}{-\mu}=
\frac{\beta}{\mu}, %
\]
there exists a function $\varepsilon(\cdot)$ tending to 0 at 0 such that
\[
b_n(s) = \frac{\beta}{ \mu} + \varepsilon\biggl( \frac{s}{n}
\biggr). %
\]
We deduce that
\[
\biggl\llvert\langle M_n \rangle\bigl(t_0
n^{2/3}\bigr) - \frac{\beta}{ \mu} t_0 n^{2/3}
\biggr\rrvert\leq\int_0^{t_0 n^{2/3}} \biggl\llvert
\varepsilon\biggl( \frac{s}{n} \biggr) \biggr\rrvert\,\mathrm{d}s.
\]
Hence, for every $\eta> 0$, there exists an integer $n_1(\eta)$ such
that for every integer $n \geq n_1(\eta)$,
\[
\biggl\llvert\langle M_n \rangle\bigl(t_0
n^{2/3}\bigr) - \frac{\beta}{ \mu} t_0 n^{2/3}
\biggr\rrvert\leq\eta t_0 n^{2/3}, %
\]
which proves (\ref{condalddeux}).

We next turn our attention to (\ref{condaldtrois}). Note that
$M_n(t)-M_n(t-) = S_n(t)-S_n(t-)$, so
\begin{eqnarray*}
\sup_{t \leq t_0 n^{2/3}} \bigl|M_n(t) - M_n(t-)\bigr|^2
& = & \sup\bigl\{ (k-2)^2\dvtx (s,k) \in\Pi_n\mbox{ and
} s \leq t_0 n^{2/3} \bigr\}
\\
& \leq&\sup\bigl\{ k^2\dvtx (s,k) \in\Pi_n\mbox{ and }
s \leq t_0 n^{2/3} \bigr\}.
\end{eqnarray*}
Let $L_n$ denote $\sup\{ k\dvtx (s,k) \in\Pi_n$ and $s \leq
t_0 n^{2/3} \}$ (we drop the dependency on $t_0$ in the notation). We have
\[
\E\bigl[L_n^2 \bigr] = \sum
_{k=1}^{\lfloor n^{1/3} \rfloor-1} \mathbb{P} ( L_n \geq\sqrt{k}
) + \sum_{k \geq
n^{1/3}} \mathbb{P} ( L_n \geq
\sqrt{k} ) \leq n^{1/3} + \sum_{k \geq n^{1/3}}
\mathbb{P} ( L_n \geq\sqrt{k} ). %
\]
Now, for every $m \in\mathbb{N}$,
\begin{eqnarray*}
\mathbb{P} ( L_n \geq m ) & = & 1- \mathbb{P} ( L_n < m
)
\\
& = & 1- \mathbb{P} \bigl( \Pi_n \bigl( \bigl[0, t_0
n^{2/3}\bigr] \times\{ m, m+1,\ldots\} \bigr) =0 \bigr)
\\
& = & 1-\exp\bigl( -\pi_n \bigl( \bigl[0, t_0
n^{2/3}\bigr] \times\{ m, m+1,\ldots\} \bigr) \bigr)
\\
& \leq& \pi_n \bigl( \bigl[0, t_0 n^{2/3}
\bigr] \times\{ m, m+1,\ldots\} \bigr)
\\
& = & \sum_{l \geq m} \nu_l \int
_0^{ t_0 n^{2/3}} \mathrm{d}s\, l e^{- l \psi(s/n)}
\psi'(s/n)
\\
& = & n \sum_{l \geq m} \nu_l \bigl(
1-e^{- l \lb(t_0 n^{-1/3})} \bigr)
\\
& \leq& n \lb\bigl(t_0 n^{-1/3}\bigr) \sum
_{l \geq m} l \nu_l.
\end{eqnarray*}
As a result,
\begin{eqnarray*}
\E\bigl[L_n^2 \bigr] & \leq& n^{1/3} + \sum
_{k \geq n^{1/3}} n \lb\bigl(t_0 n^{-1/3}
\bigr) \sum_{l \geq\sqrt{k}} l \nu_l
\\
& = & n^{1/3} + n \lb\bigl(t_0 n^{-1/3}\bigr)
\sum_{l \geq n^{1/6}} l \nu_l \sum
_{k= \lfloor n^{1/3} \rfloor}^{l^2} 1.
\end{eqnarray*}
We deduce that for every integer $n$,
\[
n^{-2/3} \E\Bigl[ \sup_{t \leq t_0 n^{2/3}} \bigl|M_n(t) -
M_n(t-)\bigr|^2 \Bigr] \leq n^{-1/3} +
n^{1/3} \lb\bigl(t_0 n^{-1/3}\bigr) \sum
_{l \geq
n^{1/6}} l^3 \nu_l. %
\]
Now, $n^{1/3} \lb(t_0 n^{-1/3})$ tends to $\frac{t_0}{\mu}$ and
since $\E[D^3]$ is finite, $\sum_{l \geq n^{1/6}} l^3 \nu_l$ tends
to 0. Equation (\ref{condaldtrois}) is therefore proved.
\end{pf}

%sA #&#
\section{The power law distribution setting}

%sA.1 #&#
\subsection{End of the proof of Theorem~\texorpdfstring{\protect\ref{monpetitbbht}}{8.1}}

This section is organized as follows. In Lemma~\ref
{babyonemofgtjjudeu} we shall study the martingale $\widebar
{M}_n^{(1)}$ related to the small jumps of~$\widebar{M}_n$. Then, in
Lemma~\ref{babyonemofgtjjutros}, we shall be interested in the
martingale $\widebar{M}_n^{(2)}$ which counts the big jumps. The fact
that $\widebar{M}_n = \widebar{M}_n^{(1)} + \widebar{M}_n^{(2)}$
converges to $X^{\nu}$, which is the sum of the limits of $\widebar
{M}_n^{(1)}$ and $\widebar{M}_n^{(2)}$, stems from the independence
of $\widebar{M}_n^{(1)}$ and $\widebar{M}_n^{(2)}$ (since they
never jump simultaneously).
To ease notation, let
\[
a = \frac{1}{\gamma-1}. %
\]

%
%
%leA.1 #&#
\begin{lemme} \label{babyonemofgtjjudeu}
The martingale $\widebar{M}_n^{(1)}$ defined for every $t \geq0$ by
\begin{eqnarray*}
\widebar{M}_n^{(1)} (t) &=& \sum
_{(s,k) \in\Pi_n} \mathbf{1}_{k <
n^{a}} (k-2)n^{-a}
\mathbf{1}_{s \leq t n^{1-a}}
\\
& &{} - \int_{ (0, n) \times\mathbb{N}^{\ast} } \pi_n (\mathrm{d}s, k)
\mathbf{1}_{k < n^{a}} (k-2) n^{-a} \mathbf{1}_{s \leq t n^{1-a}}
\end{eqnarray*}
converges in distribution with respect to the Skorokhod topology on
every finite interval as $n \rightarrow\infty$ to a process
$(X^{(1)}_t)_{t \geq0}$ with independent increments characterized by:
for every $t \geq0$ and $u \in\mathbb{R}$,
\[
\mathbb{E} \bigl[ \exp\bigl( i u X^{(1)}_t \bigr) \bigr] =
\exp\biggl( \int_0^t \mathrm{d}s \int
_0^1 \mathrm{d}x \bigl( e^{i u x} -1 -iux
\bigr) \frac{c}{\mu} \frac{1}{x^{\gamma-1}} e^{-x s / \mu} \biggr).
\]
\end{lemme}
\begin{pf}
First observe that the process $X^{(1)}$ may be defined as
the limit for the metric induced by the norm
\[
\| Y \| = \mathbb{E} \bigl[ \sup\bigl\{ Y_s^2\dvtx 0 \leq s
\leq t \bigr\} \bigr]^{1/2} %
\]
of the Cauchy family
\[
t \mapsto\sum_{s \leq t} \mathbf{1}_{ \Delta_s > \varepsilon}
\Delta_s - \int_0^t \mathrm{d}s \int
_\varepsilon^1 \mathrm{d}x\,   x \frac{c}{\mu}
\frac{1}{x^{\gamma-1}} e^{-xs/\mu} %
\]
as $\varepsilon$ tends to 0, $\Delta$ being a Poisson point process
with intensity $ \mathbf{1}_{ x \in(0,1)} \nu(\mathrm{d}s, {\mathrm
{d}}x)$ where
\[
\nu(\mathrm{d}s, \mathrm{d}x) = \frac{c}{\mu} \frac{1}{x^{\gamma-1}}
e^{-xs/\mu} \,\mathrm{d}s \,\mathrm{d}x. %
\]
To prove Lemma~\ref{babyonemofgtjjudeu}, we rely on \cite{MR1943877},
Theorem~VII.3.7. Dealing with small jumps of the martingale
$\widebar{M}_n$ indeed enables us to work with ``square-integrable''
processes [note that $\int_0^t \int_\mathbb{R} x^2 \mathbf{1}_{ x
\in(0,1)} \nu(\mathrm{d}s, \mathrm{d}x) < \infty$].

Taking the same notation as in \cite{MR1943877}, we first have to
compute the characterics $(B^n, C^n, \nu^n)$ of $\widebar
{M}_n^{(1)}$, which are defined via the following equation: for every
$t \geq0$ and $u \in\mathbb{R}$,
\begin{eqnarray*}
&& \mathbb{E} \bigl[ \exp\bigl( i u \widebar{M}_n^{(1)} (t)
\bigr) \bigr]
\\
&&\qquad = \exp\biggl( iuB^n(t) - \frac{1}{2}u^2C^n(t)
+ \int_0^t\! \int_{-n^{-a}}^1
\bigl( e^{i u x} -1 -iux \bigr) \nu^n( \mathrm{d}s, \mathrm{d}x )
\biggr).
\end{eqnarray*}
The exponential formula for Poisson point processes yields
\begin{eqnarray*}
&& \mathbb{E} \bigl[ \exp\bigl( i u \widebar{M}_n^{(1)} (t)
\bigr) \bigr]
\\
&&\qquad = \exp\biggl\{ n \sum_{k < n^a}
\nu_k \bigl( 1 -e^{-k \psi
(t n^{-a})} \bigr) \bigl( e^{iu (k-2)n^{-a}} -1 -
iu (k-2)n^{-a} \bigr) \biggr\}.
\end{eqnarray*}
Consequently, $B^n =C^n = 0$ and
\[
\nu^n ( \mathrm{d}s, \mathrm{d}x ) = \mathrm{d}s \sum
_{k < n^a} \delta_{(k-2)n^{-a}} (\mathrm{d}x )
n^{1-a} k \nu_k \psi'\bigl(s
n^{-a}\bigr) e^{-k \psi(s n^{-a})}. %
\]
According to \cite{MR1943877}, Theorem~VII.3.7, Lemma~\ref
{babyonemofgtjjudeu} will be proved as soon as we have shown that for
every $t \geq0$,
%
%
%eA.1 #&#
\begin{eqnarray}
\label{peemheqaverjacoo} \int_0^t \!\int
_{-n^{-a}}^1 x^2 \nu^n( {
\mathrm{d}}s, \mathrm{d}x ) & \undersett{n \rightarrow\infty}
{\longrightarrow} & \int
_0^t \!\int_{0}^1
x^2 \nu( \mathrm{d}s, \mathrm{d}x ),
\end{eqnarray}
and for every $t \geq0$ and $g \in C_2(\mathbb{R_+})$,
%
%
%eA.2 #&#
\begin{eqnarray}
\label{deuxxheqaverjacoo} \int_0^t \!\int
_{-n^{-a}}^1 g(x) \nu^n( \mathrm{d}s, {
\mathrm{d}}x ) & \undersett{n \rightarrow\infty} {\longrightarrow} & \int
_0^t \!\int_{0}^1
g(x) \nu( \mathrm{d}s, \mathrm{d}x ),
\end{eqnarray}
where $ C_2(\mathbb{R_+})$ is the set of all continuous bounded
functions $\mathbb{R_+} \to\mathbb{R}$ which are~0 on a neighborhood
0 and have a limit at infinity.

Let us establish (\ref{peemheqaverjacoo}). Elementary calculations yield
\begin{eqnarray*}
\int_0^t \!\int_{-n^{-a}}^1
x^2 \nu^n( \mathrm{d}s, \mathrm{d}x ) & = & n^{1-2a}
\sum_{k<n^a} (k-2)^2 \nu_k
\bigl(1-e^{-k \psi(t n^{-a})} \bigr).
\end{eqnarray*}
A difficulty stems from the lack of good estimates for $\nu_k$ when
$k$ is small. That is why we write
\begin{eqnarray*}
\int_0^t \!\int_{-n^{-a}}^1
x^2 \nu^n( \mathrm{d}s, \mathrm{d}x ) & = & n^{1-2a}
\sum_{k \in\mathbb{N}^*} (k-2)^2 \nu_k
\bigl(1-e^{-k \psi
(t n^{-a})} \bigr)
\\
& &{} - n^{1-2a} \sum_{k \geq n^a}
(k-2)^2 \nu_k \bigl(1-e^{-k \psi(t
n^{-a})} \bigr).
\end{eqnarray*}
It is easy to see that the first term in the difference tends to $
\frac{c \Gamma(4-\gamma)}{(\gamma-3) \mu^{\gamma-3}} t ^{\gamma
-3}$. As for the second, recalling that $\nu_k \sim c k^{-\gamma}$,
\begin{eqnarray*}
&& n^{1-2a} \sum_{k \geq n^a} (k-2)^2
\nu_k \bigl(1-e^{-k \psi(t
n^{-a})} \bigr)
\\
&&\qquad  \undersett{n \rightarrow\infty}
{\sim} n^{1-2a} \int_{n^a}^{\infty} \mathrm{d}x\,
x^2 c x^{-\gamma} \bigl(1-e^{-x \psi
(t n^{-a})} \bigr).
\end{eqnarray*}
A change of variable and an application of the dominated convergence
theorem [recall that $\lb(x) = \frac{x}{\mu} + o(x)$] yield
\[
n^{1-2a} \sum_{k \geq n^a} (k-2)^2
\nu_k \bigl(1-e^{-k \psi(t
n^{-a})} \bigr) \undersett{n \rightarrow\infty}
{\longrightarrow} \int_{1}^{\infty} \mathrm{d}x\,
c \frac{1-e^{-x t / \mu
}}{x^{\gamma-2}}.
\]
Noticing that
\[
\frac{c   \Gamma(4-\gamma)}{(\gamma-3)\mu^{\gamma-3}} t ^{\gamma
-3} = \int_{0}^{\infty}
\mathrm{d}x \,  c \frac
{1-e^{-x t / \mu}}{x^{\gamma-2}}, %
\]
and we finally get
\begin{eqnarray*}
\int_0^t \!\int_{-n^{-a}}^1
x^2 \nu^n( \mathrm{d}s, \mathrm{d}x ) & \undersett{n
\rightarrow\infty} {\longrightarrow} & \int_{0}^{1}
\mathrm{d}x \,  c \frac{1-e^{-x t / \mu}}{x^{\gamma-2}},
\end{eqnarray*}
which proves (\ref{peemheqaverjacoo}).

We now turn our attention to (\ref{deuxxheqaverjacoo}). Let
$\varepsilon\in(0,1)$ and $g\dvtx [\varepsilon, 1] \to\mathbb{R}$ be
a continuous function. Then
\begin{eqnarray*}
\int_0^t \!\int_{-n^{-a}}^1
g(x) \nu^n( \mathrm{d}s, \mathrm{d}x ) & = & n \sum
_{\varepsilon n^a < k<n^a} g \biggl(\frac{k-2}{n^a} \biggr) \nu_k
\bigl(1-e^{-k \psi(t n^{-a})} \bigr).
\end{eqnarray*}
Proceeding as before, we obtain
\begin{eqnarray*}
\int_0^t \!\int_{-n^{-a}}^1
g(x) \nu^n( \mathrm{d}s, \mathrm{d}x ) & \undersett{n \rightarrow
\infty} {
\longrightarrow} & \int_{\varepsilon}^{1} \mathrm{d}x\, g(x) c
\frac{1-e^{-x t / \mu
}}{x^{\gamma}},
\end{eqnarray*}
completing the proof of Lemma~\ref{babyonemofgtjjudeu}.
\end{pf}

In order to finish the proof Theorem~\ref{monpetitbbht}, we now
show the convergence of the martingale related to the big jumps.
%
%
%leA.2 #&#
\begin{lemme} \label{babyonemofgtjjutros}
The martingale $\widebar{M}_n^{(2)}$ defined for every $t \geq0$ by
\begin{eqnarray*}
\widebar{M}_n^{(2)} (t) &=& \sum
_{(s,k) \in\Pi_n} \mathbf{1}_{k
\geq n^{a}} (k-2)n^{-a}
\mathbf{1}_{s \leq t n^{1-a}}
\\
& &{} - \int_{(0, n) \times\mathbb{N}^{\ast}} \pi_n (\mathrm{d}s,k)
\mathbf{1}_{k \geq n^{a}} (k-2) n^{-a} \mathbf{1}_{s \leq t n^{1-a}}
\end{eqnarray*}
converges in distribution with respect to the Skorokhod topology on
every finite interval as $n \rightarrow\infty$ to a process
$(X^{(2)}_t)_{t \geq0}$ with independent increments characterized by:
for every $s, t \geq0$, $u \in\mathbb{R}$,
\[
\mathbb{E} \bigl[ \exp\bigl( i u X^{(2)}_t \bigr) \bigr] =
\exp\biggl( \int_0^t \mathrm{d}s \int
_1^\infty\mathrm{d}x \bigl( e^{i u
x} -1 -iux
\bigr)\frac{c}{\mu} \frac{1}{x^{\gamma-1}} e^{-xs/\mu
} \biggr). %
\]
\end{lemme}
\begin{pf}
The existence of $X^{(2)}$ is easily obtained as the sum of
%
%
%eA.3 #&#
\begin{equation}
\label{alorslakoiyt} B^{\nu}_t = - \int_0^t
\mathrm{d}s \int_1^\infty\mathrm{d}x \,  x
\frac{c}{\mu} \frac{1}{x^{\gamma-1}} e^{-xs/\mu},\qquad t \geq0,
\end{equation}
and the partial sum of the jumps of a Poisson point process with
intensity $\mathbf{1}_{ x \geq1} \nu(\mathrm{d}s, \mathrm{d}x) $
[recall that
$
\nu(\mathrm{d}s, \mathrm{d}x) = \frac{c}{\mu} \frac{1}{x^{\gamma-1}}
e^{-xs/\mu}\, \mathrm{d}s\, \mathrm{d}x$].
Let\vspace*{2pt} us see how Lemma~\ref{babyonemofgtjjutros} derives from \cite
{MR1943877}, Theorem~VII.3.4.

As before, we first have to compute the characterics $(B^n, C^n, \nu
^n)$ of $\widebar{M}_n^{(2)}$, which are now defined via the
equation: for every $s, t \geq0$, $u \in\mathbb{R}$,
\begin{eqnarray*}
&& \mathbb{E} \bigl[ \exp\bigl( i u \widebar{M}_n^{(2)} (t)
\bigr) \bigr]
\\
&&\qquad = \exp\biggl( iu B^n(t) - \frac{1}{2}u^2
C^n(t) + \int_0^t \!\int
_{1- 2 n^{-a}}^{\infty} \bigl( e^{i u x} -1 \bigr)
\nu^n( \mathrm{d}s, \mathrm{d}x ) \biggr).
\end{eqnarray*}
The exponential formula for Poisson point processes yields
\begin{eqnarray*}
\mathbb{E} \bigl[ \exp\bigl( i u \widebar{M}_n^{(2)} (t)
\bigr) \bigr] & = & \exp\biggl\{ -iu n^{1-a} \sum
_{k \geq n^a} (k-2) \nu_k \bigl(1 -e^{-k \psi(t n^{-a})}
\bigr)
\\
&&\hspace*{20pt}{} + n \sum_{k \geq n^a} \nu_k
\bigl(1-e^{-k \psi(t
n^{-a})} \bigr) \bigl( e^{iu (k-2)n^{-a}} -1 \bigr) \biggr\}.
\end{eqnarray*}
Consequently, $C^n = 0$,
\[
B^n(t) = - n^{1-a} \sum_{k \geq n^a}
(k-2) \nu_k \bigl( 1 -e^{-k
\psi(t n^{-a})} \bigr) %
\]
and
\[
\nu^n ( \mathrm{d}s, \mathrm{d}x ) = \mathrm{d}s \sum
_{k \geq n^a} \delta_{(k-2)n^{-a}} (\mathrm{d}x )   k
\nu_k n^{1-a} \psi'\bigl(s n^{-a}
\bigr) e^{-k \psi(s n^{-a})}. %
\]
According to \cite{MR1943877}, Theorem~VII.3.4, Lemma~\ref
{babyonemofgtjjutros} will be proved as soon as we have shown that for
every $t \geq0$,
%
%
%eA.4 #&#
\begin{eqnarray}
\label{peemheqaverjacaa} \sup_{s \leq t} \bigl\llvert B^n(t)-B^{\nu}_t
\bigr\rrvert& \undersett{n \rightarrow\infty} {\longrightarrow} & 0,
\end{eqnarray}
and for every $t \geq0$ and $g \in C_2(\mathbb{R_+})$,
%
%
%eA.5 #&#
\begin{eqnarray}
\label{deuxxheqaverjacaa} \int_0^t \!\int
_{1- 2 n^{-a}}^{\infty} g(x) \nu^n( \mathrm{d}s, \mathrm{d}x ) &
\undersett{n \rightarrow\infty} {\longrightarrow} & \int
_0^t \!\int_1^{\infty}
g(x) \nu( \mathrm{d}s, \mathrm{d}x ).
\end{eqnarray}
Equation (\ref{deuxxheqaverjacaa}) can be shown exactly the same way
as (\ref{deuxxheqaverjacoo}), and to prove (\ref{peemheqaverjacaa}),
it suffices to compare the series to the corresponding integrals as we
did above.
\end{pf}

%sA.2 #&#
\subsection{Completion of the proof of Lemma~\texorpdfstring{\protect\ref{encoreunle}}{8.4}}

In this section, we give the missing elements in the proof of
Lemma~\ref{encoreunle}. This is provided by Lemma~\ref{lederdesderheeh}.

%
%
%leA.3 #&#
\begin{lemme} \label{lederdesderheeh}
The following four assertions hold:
\begin{enumerate}
\item[(1)] \label{premierpoint} $ X^{\nu}_t + A^{\nu}_t \stackrel
{p}{\rightarrow} - \infty$ as $t \rightarrow\infty$;
\item[(2)] \label{premierpointtro} $\sup{}\{|\gamma|\dvtx \gamma$
is an excursion of $R^{\nu}$ s.t.  $l(\gamma) \geq t \}
\stackrel{p}{\rightarrow} 0$ as $t \rightarrow\infty$;\vadjust{\goodbreak}
\item[(3)] \label{premierpointqua} The set $\{ t\dvtx R^{\nu}_t = 0 \}$
contains no isolated points a.s.;
\item[(4)] \label{premierpointde} For every $t > 0$, $\mathbb
{P}(R^{\nu}_t=0)=0$.
\end{enumerate}
\end{lemme}

\begin{pf*}{Proof of Lemma \ref{premierpoint}(1)}
By Lemma~\ref{babyonemofgtjjudeu},
\[
\mathbb{E} \bigl[ \bigl( X^{(1)}_t \bigr)^2
\bigr] = \frac
{c}{\mu} \int_0^t \mathrm{d}s
\int_0^1 \mathrm{d}x \frac{1}{x^{\gamma
-3}}
e^{-x s / \mu} \leq c t \int_0^{1/t} \mathrm{d}x
\frac
{1}{x^{\gamma-3}} + c \int_{1/t}^\infty\mathrm{d}x
\frac
{1}{x^{\gamma-2}},
\]
so that
\[
\mathbb{E} \bigl[ \bigl( X^{(1)}_t \bigr)^2
\bigr] \leq\frac
{c}{(\gamma-3)(4-\gamma)} t^{\gamma-3}.
\]
Applying Markov's inequality, we deduce that
%
%
%eA.6 #&#
\begin{eqnarray}
\label{equatclepourpointtr} t^{-(\gamma-3)} X^{(1)}_t & \mathop{
\longrightarrow}\limits
_{t
\rightarrow\infty}^{p}& 0.
\end{eqnarray}
Letting $\eta= (\gamma-3)/2$, this implies that
$t^{-(1+\eta)} X^{(1)}_t \stackrel{p}{\rightarrow} 0$ as $t
\rightarrow\infty$.
Then notice that $X^{(2)}_t$ is less than $\sum_{s \leq t} \Delta_s
$, where $\Delta$ is a Poisson point process with intensity $ \mathbf
{1}_{ x \geq1} \nu(\mathrm{d}s, \mathrm{d}x)$ [recall that $\nu
(\mathrm{d}s, \mathrm{d}x) = \frac{c}{\mu} \frac{1}{x^{\gamma-1}}
e^{-xs/\mu}\, \mathrm{d}s\, \mathrm{d}x
$]. Now
$
\mathbb{E}[\sum_{s \leq t} \Delta_s] = \frac{c}{\mu} \int_0^t
\mathrm{d}s \int_1^\infty\mathrm{d}x \frac{1}{x^{\gamma-2}} e^{-x s /
\mu} \leq\frac{c}{\mu(\gamma-3)} t$.
Consequently, by\break  Markov's inequality, $t^{-(1+\eta)} \sum_{s \leq t}
\Delta_s \stackrel{p}{\rightarrow} 0$ as $t \rightarrow\infty$.
Since $t^{-(1+\eta)} A^{\nu}_t \rightarrow- \infty$ as $t
\rightarrow\infty$, property~(1) is proved.
\end{pf*}

\begin{pf*}{Proof of Lemma \ref{premierpointtro}(2)}
Restate (2) as follows: for every $\varepsilon> 0$,
\[
\mbox{number of } \bigl(\mbox{excursion of }R^\nu\mbox{ with length }>
2\varepsilon\bigr) < \infty\qquad\mbox{a.s.}
\]
Fix\vspace*{2pt} $\varepsilon> 0$ and define events $C_n = \{ \sup_{s \in[(n-1)
\varepsilon, n \varepsilon]} ( X^\nu_{(n+1)\varepsilon} + A^\nu
_{(n+1)\varepsilon} - X^\nu_s - A^\nu_s ) > 0\}$. It is easily seen
that it suffices to show that $\mathbb{P}(C_n$ infinitely
often$)=0$. By (\ref{equatclepourpointtr}), it is enough to prove that
%
%
%eA.7 #&#
\begin{equation}
\label{oufffffdisocnf} \sum_{n \geq1+ s_0/\varepsilon} \mathbb{P}
\bigl(C_n \cap C^{s_0}\bigr) < \infty\qquad\mbox{for every
large $s_0$},
\end{equation}
where\vspace*{1pt} $C^{s_0} = \{ \sup_{t \geq s_0} t^{-(\gamma-3)} |X^{(1)}_t|
\leq\delta\}$ for some positive (small) constant $\delta> 0$ to be
chosen later. Now
\begin{eqnarray*}
C_n & \subset&\biggl\{  \sup_{s \in[(n-1) \varepsilon, n
\varepsilon]} \bigl(
X^{(2)}_{(n+1)\varepsilon} - X^{(2)}_s \bigr)
\\
&&\hspace*{6pt} \geq
\frac{c   \Gamma(4-\gamma)}{(\gamma-3)(\gamma- 2)
\mu^{\gamma-2}} \varepsilon^{\gamma-2} \bigl((n+1)^{\gamma-2} -
n^{\gamma-2} \bigr)
\\
&&\hspace*{87pt}{} - \sup_{s \in[(n-1) \varepsilon, n \varepsilon]} \bigl(
X^{(1)}_{(n+1)\varepsilon} -
X^{(1)}_s \bigr) \biggr\}.
\end{eqnarray*}
For every $n$ larger than $1+ s_0/\varepsilon$, on $C^{s_0}$, we have
\begin{eqnarray*}
\sup_{s \in[(n-1) \varepsilon, n \varepsilon]} \bigl(
X^{(1)}_{(n+1)\varepsilon} -
X^{(1)}_s \bigr) & \leq& 2 \delta\varepsilon^{\gamma-3}
(n+1)^{\gamma-3} \leq2 \delta\varepsilon^{\gamma-3} 2^{\gamma-3}
n^{\gamma-3}.
\end{eqnarray*}
Consequently, for every $n$ larger than $1+ s_0/\varepsilon$,
\begin{eqnarray*}
C_n &\cap& C^{s_0} \subset\biggl\{ \sup_{s \in[(n-1) \varepsilon, n
\varepsilon]}
\bigl( X^{(2)}_{(n+1)\varepsilon} - X^{(2)}_s \bigr)
\\
&&\hspace*{35pt} \geq\biggl( \frac{c   \Gamma(4-\gamma)}{(\gamma-3) \mu
^{\gamma-2}} \varepsilon^{\gamma-2} - \delta
\varepsilon^{\gamma
-3} 2^{\gamma-2} \biggr) n^{\gamma-3} \biggr\}.
\end{eqnarray*}
Taking
$
\delta= \varepsilon\frac{c   \Gamma(4-\gamma)}{(\gamma-3)
\mu^{\gamma-2} 2^{\gamma-1}}$,
and denoting
$
\frac{c   \Gamma(4-\gamma)}{2 (\gamma-3) \mu^{\gamma-2}}
\varepsilon^{\gamma-2}
$
by $\rho$,
we thus have for every $n$ large enough,
\[
C_n \cap C^{s_0} \subset\Bigl\{ \sup_{s \in[(n-1) \varepsilon, n
\varepsilon]}
\bigl( X^{(2)}_{(n+1)\varepsilon} - X^{(2)}_s \bigr)
\geq\rho n^{\gamma-3} \Bigr\}. %
\]
Now,\vspace*{1pt} considering a Poisson point process $\Delta$ with intensity $
\mathbf{1}_{ x \geq1} \nu(\mathrm{d}s, \mathrm{d}x)$, where $\nu
(\mathrm{d}s, \mathrm{d}x) = \frac{c}{\mu} \frac{1}{x^{\gamma-1}}
e^{-xs/\mu}\, \mathrm{d}s\, \mathrm{d}x
$, observe that
\begin{eqnarray*}
& & \mathbb{P} \Bigl( \sup_{s \in[(n-1) \varepsilon, n \varepsilon
]} \bigl( X^{(2)}_{(n+1)\varepsilon}
- X^{(2)}_s \bigr) \geq\rho n^{\gamma-3} \Bigr)
\\
&&\qquad \leq \mathbb{P} \biggl( \sum_{s \in[(n-1)\varepsilon,
(n+1)\varepsilon]}
\Delta_s \geq\rho n^{\gamma-3} \biggr)
\\
&&\qquad  \leq \rho^{-1} n^{-\gamma+3} \mathbb{E} \biggl[ \sum
_{s \in
[(n-1)\varepsilon, (n+1)\varepsilon]} \Delta_s \biggr]
\\
&&\qquad  =  \rho^{-1} n^{-\gamma+3} \frac{c}{\mu} \int
_{(n-1)\varepsilon
}^{(n+1)\varepsilon} \mathrm{d}s \int_1^{\infty}
\mathrm{d}x \,  x \frac{1}{x^{\gamma-1}} e^{-xs/\mu}.
\end{eqnarray*}
We deduce that for every $n$ larger than $2+ s_0/\varepsilon$,
\begin{eqnarray*}
\mathbb{P} \bigl( C_n \cap C^{s_0} \bigr)& \leq&
\frac{2
\varepsilon c}{\rho\mu} n^{-\gamma+3} \int_1^{\infty} {
\mathrm{d}}x  \, x^{2-\gamma} e^{-n x \varepsilon/(2 \mu)} \leq
\frac{4
c}{\rho}
n^{-\gamma+2} e^{-n \varepsilon/(2 \mu)},
\end{eqnarray*}
which proves (\ref{oufffffdisocnf}) and completes the proof of
assertion~(2).
\end{pf*}

\begin{pf*}{Proof of Lemma \ref{premierpointqua}(3)}
To show property~(3), we first consider the case
$t=0$. We aim at showing that
$
\inf\{ s > 0\dvtx X^{\nu}_s + A^{\nu}_s < 0 \} = 0$ a.s.
To do so, we shall in fact prove an analogous result for a certain L\'
evy process, which will be obtained by using standard properties of L\'
evy processes. We shall deduce property~(3) by
comparing our process $X^\nu$ with the studied L\'evy process.

Observe\vspace*{1pt} that for every $s \in[0,\infty)$ and $x \in(0, \infty)$,
$\frac{c}{\mu} \frac{1}{x^{\gamma-1}} e^{-x s / \mu} \leq\frac
{c}{\mu} \frac{1}{x^{\gamma-1}}$. Recalling the two remarks\vspace*{1pt} situated
at the beginning of the proofs of Lemmas~\ref{babyonemofgtjjudeu}~and~\ref{babyonemofgtjjutros} (we described there a way to define
$X^{(1)}$ and $X^{(2)}$), we can couple the process $X^{\nu}$ and
construct a stable process $L$ with index $\gamma-2$ with no negative
jumps such that
\[
\forall s \geq0, \forall u \in\mathbb{R} \qquad\mathbb{E} \bigl[
\exp( i u
L_s ) \bigr] = \exp\biggl( s \int_0^\infty{
\mathrm{d}}x \bigl( e^{i u x} -1 -iux \bigr) \frac{c}{\mu}
\frac{1}{x^{\gamma-1}} \biggr)
\]
satisfying
\begin{eqnarray*}
\forall s \geq0\qquad X^{\nu}_s & \leq& L_s +
\frac{c}{\mu} \int_0^s \mathrm{d}r \int
_0^\infty\mathrm{d}x \frac{1}{x^{\gamma-2}} \bigl( 1-
e^{-x r / \mu} \bigr),
\end{eqnarray*}
that is,
\begin{eqnarray*}
\forall s \geq0\qquad X^{\nu}_s & \leq& L_s +
\frac{c
\Gamma(4-\gamma)}{(\gamma-3)(\gamma- 2) \mu^{\gamma-2}} s^{\gamma-2}.
\end{eqnarray*}
Consequently
$
X^\nu+ A^{\nu} \leq L$.
Since
$
\inf\{ s > 0\dvtx L_s < 0 \} = 0$ a.s.,
with probability 1, 0 is not an isolated point of the set $\{t\dvtx
R^\nu
_t=0 \}$.

This is now standard to get assertion~(3); see, for
instance, \cite{MR1406564}, Proposition~VI.4, or the end of the proof
of assertion (d) of \cite{MR1491528}, Proposition~14.
\end{pf*}

\begin{pf*}{Proof of Lemma~\ref{premierpointde}(4)}
Here again, we shall use a coupling argument. Indeed, imagine we are
able to prove that for a certain process $(Q_s)_{s \in[0,t]}$,
\[
\mathbb{P} \bigl( Q_{t} = \inf\bigl\{ Q_s\dvtx s \in[0,t]
\bigr\} \bigr) = 0,
\]
and
for every $s \in[0, t]$,
\begin{eqnarray*}
X^{\nu}_{t} + A^{\nu}_{t} - \bigl(
X^{\nu}_{s} + A^{\nu}_{s} \bigr) &
\geq& Q_{t} - Q_{s}.
\end{eqnarray*}
Then, with probability one,
\[
\sup\bigl\{ X^{\nu}_{t} + A^{\nu}_{t} -
\bigl( X^{\nu}_{s} + A^{\nu}_{s} \bigr)\dvtx s \in[0, t] \bigr\} \geq
\sup\bigl\{ Q_{t} - Q_{s}\dvtx s
\in[0, t] \bigr\} > 0, %
\]
establishing assertion~(4). Let us prove that such a
coupling exists.

We have to bound the increments of $X^{\nu}+A^{\nu} $ from below. We
first focus on $X^{(1)}$. Let $t \in(0, \infty)$. Arguing as before
(just recall the remark made at the beginning of Lemma~\ref
{babyonemofgtjjudeu}), since for every $s \in[0,t]$ and $x \in(0,
1)$, $\frac{c}{\mu} \frac{1}{x^{\gamma-1}} e^{-x s / \mu} \geq
\frac{c}{\mu} \frac{1}{x^{\gamma-1}} e^{-x t / \mu} $, we can
construct a L\'{e}vy process $(Q^{(1)}_s)_{ s \in[0,t]}$ such that
%
%eA.8 #&#
\begin{eqnarray}
\mathbb{E} \bigl
[ \exp
\bigl( i u Q^{(1)}_s \bigr) \bigr] = \exp\biggl( s \int
_0^1 \mathrm{d}x \bigl( e^{i u x} -1 -iux
\bigr) \frac{c}{\mu
} \frac{1}{x^{\gamma-1}} e^{-x t / \mu} \biggr)\nonumber
\\
\eqntext{\forall s \in[0,t], \forall u \in\mathbb{R} }
\end{eqnarray}
satisfying
%
%eA.9 #&#
\begin{eqnarray}
X^{(1)}_{t} - X^{(1)}_{s}
\geq Q^{(1)}_{t} - Q^{(1)}_{s} +
\frac{c}{\mu} \int_s^{t} \mathrm{d}r \int
_0^1 \mathrm{d}x \frac{1}{x^{\gamma-2}} \bigl(
e^{-x t / \mu}- e^{-x r / \mu} \bigr)\nonumber
\\
\eqntext{\forall s \in[0,t].}
\end{eqnarray}
Since for every $a, b \in(0, \infty)$ such that $a<b$, $e^{-a}-e^{-b}
\leq b-a$, we have for every $s \in[0, t]$
\begin{eqnarray*}
X^{(1)}_{t} - X^{(1)}_{s} & \geq&
Q^{(1)}_{t} - Q^{(1)}_{s} -
\frac
{c}{2 (4-\gamma)\mu^2} (t - s)^2.
\end{eqnarray*}
Recalling the definition of $B^\nu$ [see (\ref{alorslakoiyt})], we
deduce that for every $s \in[0, t]$,
\begin{eqnarray*}
X^{\nu}_{t} - X^{\nu}_{s} & \geq&
Q^{(1)}_{t} - Q^{(1)}_{s} -
\frac
{c}{2 (4-\gamma)\mu^2} (t - s)^2 + B^{\nu}_t -
B^{\nu}_s.
\end{eqnarray*}
We easily deduce that there exists $C > 0$ (only depending on $t$) such
that for every $s \in[0, t]$,
\begin{eqnarray*}
X^{\nu}_{t} + A^{\nu}_{t} - \bigl(
X^{\nu}_{s} + A^{\nu}_{s} \bigr) &
\geq& Q^{(1)}_{t} - Q^{(1)}_{s} - C(t-s).
\end{eqnarray*}
Consequently,
\begin{eqnarray*}
&& \sup\bigl\{ X^{\nu}_{t} + A^{\nu}_{t} -
\bigl( X^{\nu}_{s} + A^{\nu}_{s} \bigr)\dvtx s \in[0, t] \bigr\}
\\
&&\qquad \geq
\sup\bigl\{ Q^{(1)}_{t} - C t -
\bigl( Q^{(1)}_{s} - Cs \bigr)\dvtx s \in[0, t] \bigr\}.
\end{eqnarray*}
Now, applying \cite{MR1406564}, Theorem~VII.2 and page 158, to the L\'
evy process $(Q^{(1)}_s - Cs)_{ s \in[0,t]}$, we have
\[
\mathbb{P} \bigl( Q^{(1)}_{t} - C t = \inf\bigl\{
Q^{(1)}_s - Cs\dvtx s \in[0,t] \bigr\} \bigr) = 0.
\]
We deduce that
\[
\mathbb{P} \bigl( X^{\nu}_{t} + A^{\nu}_{t}
= \inf\bigl\{ X^{\nu
}_{s} + A^{\nu}_{s}\dvtx s \in[0,t] \bigr\} \bigr) = 0,
\]
which is assertion~(4).
\end{pf*}
\end{appendix}

\section*{Acknowledgments}
I am grateful to Jean Bertoin who suggested this subject to me. I
warmly thank him for fruitful discussions and comments and for his
reading. I also thank Christina Goldschmidt and Louigi Addario-Berry
for help on extending the results to simple graphs, Svante Janson for
interesting comments and an anonymous referee for careful reading and
for very helpful suggestions.

% zodis "Acknowledgments" paliekamas pagal autoriu

%suskaldyti doi

% imsref loaded by linak, 2014-03-24 11:20:17
%
% imsref loaded by linak, 2014-04-15 16:14:13
% imsref loaded by linak, 2014-04-15 16:17:16
% imsref loaded by linak, 2014-04-15 16:19:06

\printaddresses

\end{document}